\theoremstyle{plain} %--default
\newtheorem{theorem}    {Theorem}[section] 
\newtheorem*{thm}    {Theorem}
\newtheorem{lemma}      [theorem]{Lemma}
\newtheorem{corollary}  [theorem]{Corollary}
\theoremstyle{definition}
\newtheorem*{question}   {Question}
\newtheorem*{cond}    {Condition}
\theoremstyle{remark}
\newtheorem{remark}              {Remark}
\numberwithin{equation}{section}
\def\A{\mathbb A}
\def\C{\mathbb C}
\def\Q{\mathbb Q}
\begin{document}

\title[On the size of Satake parameters for GL(4)]{On the size of Satake parameters for unitary cuspidal automorphic representations for GL(4)}

\author{Nahid Walji}
%\address{SB IMB TAN,\\
%  EPFL,\\
%  Station 8, 1015 Lausanne, Switzerland}

\maketitle
\begin{abstract}
Let $\Pi$ be a cuspidal automorphic representation for GL(4) over a number field $F$. We obtain unconditional lower bounds on the number of places at which the Satake parameters are not ``too large''. In the case of self-dual $\Pi$ with non-trivial central character, our results imply that the set of places at which $\Pi$ is tempered has an explicit positive lower Dirichlet density. Our methods extend those of Ramakrishnan by careful analysis of the hypothetical possibilities for the structure of the Langlands conjugacy classes, as well as their behaviour under functorial lifts. We then discuss the analogous problem in GL(3).
\end{abstract}

\section{Introduction}

Let $n$ be a positive integer and $\pi$ a unitary cuspidal automorphic representation for GL(n)/$F$, where $F$ is a number field. Let $v$ denote a prime at which $\pi$ is unramified. We express the associated conjugacy class in $GL_n(\C)$ of Satake parameters as $A_v(\pi) = \{\alpha _{1,v}, \dots, \alpha _{n,v}\}$.

A fundamental problem in the theory of automorphic representations is to bound the size of these parameters. The \textit{Generalised Ramanujan Conjecture} (GRC) implies that $|\alpha_{i,v}| = 1$ for $i = 1, \dots , n$ and all $v$.
One approach towards this has been to obtain bounds that hold at every $v$, which currently stand at $|\alpha_{i,v}| \leq Nv^{\theta (n)}$ for $\theta (n) = {1}/{2} - {1}/{(n ^2 + 1)}$ for $n \geq 5$, with stronger bounds of 7/64, 5/14, and 9/22 holding for $n = 2,3$ and $4$, respectively~\cite{LRS99, Ki03, BB11}.

Here, we take a different approach, which is to seek stronger, \textit{uniform} bounds (i.e., not dependent on the norm of the prime) for a subset of primes $v$. Fix a real number $r \geq 1$ and define the set
\begin{align*}
S(r) = S(\pi,r) := \{v \mid 1/r \leq |\alpha_{i,v}| \leq r \text{ for all }i \}.
\end{align*}
Note that GRC would imply that $S (1)$ is the set of all $v$.
We now ask:
\begin{question}
What are the strongest lower bounds on the size of $S(r)$ that can be obtained? In particular, what lower bound can be established on its lower Dirichlet density?
\end{question}

For $r = 1$, there are known bounds for $n = 2$ and $n = 3$. When $n = 2$, Kim--Shahidi~\cite{KS02}, following D.~Ramakrishnan~\cite{Ra97}, showed that there exists a set of primes $v$ at which GRC holds (i.e., for such $v$ we have $|\alpha_{i,v}|= 1$ for all $i$) that has a lower Dirichlet density of 34/35. The method in~\cite{Ra97} also provides results, for $n > 2$, for the set of places at which $|\alpha_{1,v} + \dots + \alpha_{n,v}|\leq n$ has a lower Dirichlet density of at least $1- 1/n ^2$. However, this does not a priori provide any information on the sizes of the Satake parameters themselves. 

When $n = 3$ and $F = \Q$, Ramakrishnan~\cite{Ra04b} showed that there exist an infinite number of Ramanujan primes (i.e., the primes at which the Ramanujan conjecture holds) for any unitary cuspidal automorphic representation.
For $n = 4$ and any given number field $F$, there are no such results known for general $\pi$. If we consider a representation $\pi$ for GL(n)/$F$ ($n \geq 4$) that is cuspidal, algebraic, regular, self-dual, with a discrete series component at a finite place, and fix $F$ to be a totally real field or a CM field, then by Clozel~\cite{Cl91} one knows that all the primes at which $\pi$ is unramified are Ramanujan primes.

We will consider the case of $n = 4$. First, we establish the following condition to help us distinguish between various types of unitary cuspidal automorphic representations for GL(4).
\begin{cond}
We will say that a unitary cuspidal automorphic representation for GL(4) is of \textbf{type (T)} if it is either an Asai lift of a dihedral representation, an automorphic induction of a cuspidal automorphic representation for GL(2), or the Langlands tensor product of two such representations.
\end{cond}

Our first two theorems concern unitary cuspidal automorphic representations which are not of type (T). This is because those which are of type (T) are easier to handle and it is convenient for us to do so separately (see Theorem~\ref{t3}).

Note that an automorphic representation $\Pi$ for GL(n)/$F$ is said to be \textit{essentially self-dual} if there exists a Hecke character $\chi$ such that $\Pi \simeq \widetilde{\Pi} \otimes \chi$.

We shall prove: 
\begin{theorem}\label{t1}
 Let $\Pi$ be a unitary cuspidal automorphic representation for GL(4)/$F$ that is not of type (T). As above, for real $r \geq 1$, let 
 \begin{align*}
S = S(\pi,r) = \{v \mid 1/r \leq |\alpha_{i,v}| \leq r \text{ for all }i \},
 \end{align*}
and set $m = r + 1/r$. The variable $c$ takes the value 1 when $\Pi$ is essentially self-dual, and 0 otherwise.

Then 
  \begin{align*}
    \underline{\delta}(S) \geq 1- \left( \frac{1}{(m-2)^2} + \frac{1}{(2m-2-c)^2} \right),
  \end{align*}
where $\underline{\delta}(S)$ denotes the lower Dirichlet density of the set $S$.
\end{theorem}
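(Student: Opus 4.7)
The plan is to follow Ramakrishnan's density method, combining Chebyshev-type estimates drawn from two Rankin--Selberg L-functions whose pole structure at $s=1$ we can control; the two terms in the error correspond, one each, to these two L-functions.

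First I would record the analytic inputs. By Jacquet--Shalika, $L(s, \Pi \times \widetilde{\Pi})$ is entire apart from a simple pole at $s=1$; the logarithmic derivative then yields
\[
\sum_{v} \frac{|a_v|^2 \log q_v}{q_v^s} = \frac{1}{s-1} + O(1) \qquad (s \to 1^+),
\]
where $a_v = \alpha_{1,v}+\alpha_{2,v}+\alpha_{3,v}+\alpha_{4,v}$ and the $k\geq 2$ contributions to the log-derivative are absorbed into $O(1)$ using the current bounds towards Ramanujan. For the second Dirichlet series I would select an auxiliary L-function built from a functorial lift of $\Pi$ --- natural candidates being a suitable twist of $L(s, \Pi \times \Pi)$ (reducing to $L(s, \Pi\times\widetilde{\Pi})$ in the essentially self-dual case) or $L(s, \wedge^2 \Pi \times \overline{\wedge^2 \Pi})$ --- whose order of pole at $s=1$ differs by one between the essentially self-dual and non--essentially--self-dual cases, thereby explaining the presence of the parameter $c$.

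Next I would establish the key structural dichotomy: for every $v \notin S$, either
\[
|a_v| \;\geq\; m-2 \qquad\text{or}\qquad |b_v| \;\geq\; 2m-2-c,
\]
where $b_v$ is the Dirichlet coefficient associated to the auxiliary L-function. This requires a case analysis of the possible shapes of the Langlands conjugacy class $A_v(\Pi)$ under the constraint that some $|\alpha_{i,v}|$ lies outside $[1/r,r]$, together with the transformation of $A_v(\Pi)$ under the chosen functorial lift. The non-type-(T) hypothesis plays its central role here: when $\Pi$ is an Asai lift, an automorphic induction, or a Langlands tensor product of such, the Langlands class carries a rigid product structure allowing simultaneous cancellation in both $a_v$ and $b_v$ even though some individual $|\alpha_{i,v}|$ is far from $1$; excluding type (T) rules out precisely these pathological configurations.

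The proof is then completed by a standard Chebyshev/Tauberian step: for each Dirichlet series one obtains
\[
\overline{\delta}\{v : |a_v|\geq m-2\} \leq \frac{1}{(m-2)^2}, \qquad \overline{\delta}\{v : |b_v|\geq 2m-2-c\} \leq \frac{1}{(2m-2-c)^2},
\]
and the theorem follows by the union bound applied to the dichotomy.

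The main obstacle lies in the structural dichotomy of the middle step: identifying the correct auxiliary L-function so that its pole order at $s=1$ supplies the exact constants $m-2$ and $2m-2-c$; verifying that this order really does differ by one in the essentially self-dual case (so that the Chebyshev constant is shifted in the expected way); and then executing the case analysis of hypothetical Satake configurations to confirm the dichotomy under the non-type-(T) assumption. Ramakrishnan's GL(2)/GL(3) framework gives the architecture, but the increased number of symmetric functions available in the GL(4) setting introduces several new hypothetical shapes for $A_v(\Pi)$ and for its image under the functorial lift, and each must be analyzed in turn.
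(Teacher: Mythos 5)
Your high-level architecture is right and matches the paper: a structural dichotomy on the Satake configuration at each $v\notin S$, a trace bound from $\Pi$ itself giving one term, a trace bound from a functorial lift (here, the exterior square) giving the other, and a union bound combined with Ramakrishnan's density theorem (Theorem~\ref{dr}) to finish. However, the mechanism you propose for the parameter $c$ is wrong, and this is not a cosmetic point. You attribute the $c$-shift to the pole order of an auxiliary Rankin--Selberg $L$-function increasing from one to two in the essentially self-dual case. If you ran the Chebyshev step with a double pole you would get an extra factor of two in the density, i.e.\ $2/(2m-2)^2$, which is \emph{not} the bound $1/(2m-2-c)^2$ in the theorem (and is strictly weaker). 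The paper's actual mechanism in the essentially self-dual case is quite different: by Theorem~\ref{ar} and the accompanying remark, $\Lambda^2\Pi \simeq \widetilde{r_5}\boxplus\omega$ with $\omega$ a Hecke character; a new lemma (using the no-self-twist hypothesis plus Shahidi's nonvanishing of $L(1,\Pi,\mathrm{Sym}^2\otimes\chi)$) shows $\widetilde{r_5}$ is \emph{cuspidal}; since $|\omega_q|=1$, the lower bound $|a_q(\Lambda^2\Pi)|>2m-2$ drops by exactly one to $|a_q(\widetilde{r_5})|>2m-3$; and one then applies Ramakrishnan's theorem to the cuspidal $\widetilde{r_5}$ (simple pole, no factor of two). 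That is where the $-c$ comes from.

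Two further gaps. First, the case analysis of possible shapes of $A_q(\Pi)$ --- which you correctly flag as the ``main obstacle'' --- is where essentially all of the work is and where the constants $m-2$ and $2m-2$ actually arise; in the non--self-dual case there are only two shapes (two unitary parameters or none), but in the essentially self-dual case there are five sub-cases to track, together with the corresponding structure of $A_q(\Lambda^2\Pi)$. Your sketch does not establish the dichotomy. Second, your explanation of the type~(T) exclusion (that it rules out ``rigid product structure allowing simultaneous cancellation'') misstates the role of that hypothesis. Its actual role, via the Asgari--Raghuram criterion, is to ensure that $\Lambda^2\Pi$ is cuspidal in the non--essentially--self-dual case, and that $\widetilde{r_5}$ is cuspidal in the essentially self-dual case; without cuspidality one cannot invoke Ramakrishnan's density theorem. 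Type~(T) representations are not pathological --- they are handled separately (and with much stronger conclusions) in Theorem~\ref{t3} using GL(2) inputs.
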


\begin{remark}
The right-hand side of this inequality is positive when $r$ is greater than approximately 2.66.
For example, if $r = 4$ and $\Pi$ is not essentially self-dual, then there exists a set $S$ of primes of lower Dirichlet density greater than 0.77 at which none of the Satake parameters have absolute value greater than 4 or less than 1/4.
\end{remark}

In the self-dual case, we obtain stronger bounds for small values of $r$:
\begin{theorem}\label{t2}
Fix a unitary self-dual cuspidal automorphic representation $\Pi$ for GL(4)/$F$ that is not of type (T). Let $r, m$, and $S$ be defined as above. Then 
\begin{align*}
  \underline{\delta}(S) \geq   1 - \frac{2}{(m-1)^2} 
\end{align*}
when $\Pi$ has trivial central character, and
\begin{align*}
\underline{\delta}(S) \geq  \left(  \frac{1}{2} - {\rm min}\left\{ \frac{1}{2}, \frac{2}{(m-1)^2} \right\} \right) + \left(\frac{1}{2} - \frac{1}{m ^2} \right) 
\end{align*}
when $\Pi$ has quadratic central character.
\end{theorem}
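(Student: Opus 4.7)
I would adapt Ramakrishnan's Rankin--Selberg moment method by leveraging the extra structure afforded by self-duality. Since $\Pi$ is self-dual and unitary, the Satake multiset $A_v(\Pi)$ is closed under both $\alpha \mapsto \alpha^{-1}$ and $\alpha \mapsto \bar{\alpha}$, with $\prod_i \alpha_{i,v} = \omega_\Pi(v) \in \{\pm 1\}$. This forces the parameters into one of a small list of orbit patterns: in the trivial central character case, either a single four-element orbit $\{\alpha, \alpha^{-1}, \bar{\alpha}, \bar{\alpha}^{-1}\}$ or two orbits of two (each of the form $\{\alpha, \bar{\alpha}\}$ with $|\alpha|=1$ or $\{\alpha, \alpha^{-1}\}$ with $\alpha$ real); in the quadratic case with $\omega_\Pi(v) = -1$, the multiset is additionally forced to contain the singletons $+1$ and $-1$, leaving only one free pair $\{\alpha, \alpha^{-1}\}$.

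Next I would determine the isobaric decomposition of the exterior square. Since $\Pi$ is self-dual, either $L(s, \Pi, \wedge^2)$ or $L(s, \Pi, \mathrm{Sym}^2)$ has a simple pole at $s=1$. In the trivial $\omega_\Pi$ case, any orthogonal self-dual GL(4)-form arises as a tensor product $\pi_1 \boxtimes \pi_2$ and is of type (T); the hypothesis therefore forces $\Pi$ to be symplectic, and by Kim's exterior square lift one has $\wedge^2 \Pi = \mathbf{1} \boxplus \eta$ with $\eta$ cuspidal on GL(5). In the quadratic case $\Pi$ is instead orthogonal, with a similar but distinct decomposition. Rankin--Selberg then provides
\begin{align*}
\sum_v \frac{|a_v(\Pi)|^2}{Nv^s} &= \log \tfrac{1}{s-1} + O(1), \\
\sum_v \frac{|a_v(\eta)|^2}{Nv^s} &= \log \tfrac{1}{s-1} + O(1),
\end{align*}
where $a_v(\eta) = a_v(\wedge^2 \Pi) - 1$ in the symplectic case. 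Combined with explicit upper bounds on $|a_v(\Pi)|$ and $|a_v(\eta)|$ for $v \in S(r)$ (obtained by running through the possible orbit types and expressing the bounds in terms of $m = r + 1/r$), the standard positivity argument (if $F(v) \geq 0$ everywhere and $F(v) \geq L$ for $v \notin S$, with $\sum_v F(v)/Nv^s \leq k \log \tfrac{1}{s-1} + O(1)$, then $\overline{\delta}(S^c) \leq k/L$) yields the loss $2/(m-1)^2$ in the trivial central character case. In the quadratic case I would split the primes by $\omega_\Pi(v)$: the twist $L(s, \Pi \times \widetilde{\Pi} \otimes \omega_\Pi)$ has no pole at $s=1$ since $\Pi$ is not an automorphic induction (by the not-type-(T) hypothesis), so it isolates the inert contribution. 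On the split half the previous argument applies with a factor-of-two loss giving $\min(1/2, 2/(m-1)^2)$; on the inert half only one GL(2)-type pair is free, and the Ramakrishnan-type estimate gives a loss of $1/m^2$.

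The main obstacle is the translation of moment estimates into parameter-size bounds. A bound on $|a_v(\Pi)|$ alone is insufficient, because in the four-element orbit the trace equals $2\cos\theta\,(\rho + \rho^{-1})$ with $\alpha = \rho e^{i\theta}$ and can vanish by phase cancellation even when $\rho = |\alpha|$ exceeds $r$. Using $|a_v(\Pi)|^2$ and $|a_v(\eta)|^2$ in tandem captures all the relevant directions in which the parameters can leave the annulus, and the precise combination yielding the $(m-1)^{-2}$ constant requires optimising a two-variable inequality in $x = \alpha + \alpha^{-1}$ and $y = \beta + \beta^{-1}$. Verifying the cuspidality of each component of $\wedge^2 \Pi$ under the not-type-(T) hypothesis requires a careful case analysis of the hypothetical parameter structures and of the possible non-cuspidal degenerations, which is where the most delicate part of the argument lies.
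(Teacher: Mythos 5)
Your proposal captures the same high-level architecture as the paper: use self-duality to pin down the small list of possible orbit patterns for $A_v(\Pi)$, pass to the exterior square lift to break the phase-cancellation degeneracy in the four-element-orbit case, and apply the Rankin--Selberg density bound (Theorem~\ref{dr}) to each relevant cuspidal object. The observation that $|a_v(\Pi)|$ alone cannot control the parameters because $a_v(\Pi) = 2\cos\theta\,(\rho + \rho^{-1})$ can vanish by cancellation is exactly the reason the paper passes to $\Lambda^2\Pi$. However, there are two genuine problems.

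First, in the quadratic central character case your claim that $\Pi$ is orthogonal ``with a similar but distinct decomposition'' is at best misleading. If $\Pi$ is self-dual, orthogonal, and not of type (T), then by Theorem~\ref{ar} the remaining possibility is that $\Pi$ is the Asai lift of a \emph{non-dihedral} GL(2) representation, in which case $\Lambda^2\Pi$ is \emph{cuspidal}---there is no isobaric decomposition at all. Your sketch has no way to handle this. The paper's argument for this case does not go through any such decomposition: it observes that on the set of primes $q$ with $\omega_\Pi(q) = -1$, the self-duality forces $A_q(\Pi) = \{uq^t, uq^{-t}, 1, -1\}$, giving $|a_q(\Pi)| > m$ directly and hence a density contribution of at most $1/m^2$; on the complementary set, where $\omega_\Pi(q) = +1$, the density bound $\min\{1/2, 2/(m-1)^2\}$ combines the Chebotarev bound $1/2$ with the trivial-character analysis. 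Your split using $L(s, \Pi\times\widetilde{\Pi}\otimes\omega_\Pi)$ and ``split/inert halves'' conflates the values of $\omega_\Pi$ with splitting behaviour in a quadratic extension, which is not warranted for a general not-type-(T) $\Pi$.

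Second, the route from the trace inequalities to the constant $2/(m-1)^2$ is not a ``two-variable optimisation.'' The paper's argument is a straight union bound: in each of Cases I, II, III with trivial determinant, one shows that either $|a_q(\Pi)| > m-1$ or $|a_q(\widetilde{r_5})| > m-1$ (where $\Lambda^2\Pi = \widetilde{r_5}\boxplus\omega$, $\widetilde{r_5}$ cuspidal on GL(5)), and then applies Theorem~\ref{dr} once to $\Pi$ and once to $\widetilde{r_5}$, each giving $1/(m-1)^2$. You correctly flag the case analysis as ``where the most delicate part of the argument lies,'' but that analysis \emph{is} the proof---it is what produces the explicit thresholds $m-1$, $2m-2$, $m^2-2$, and $m$ that your sketch does not derive.
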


\begin{remark}
For the first inequality, this provides a positive lower bound for $\underline{\delta}(S)$ when $r$ is greater than 1.88. For example, when $r = 3$ the set $S$ has a lower Dirichlet density that is greater than 0.82.
\end{remark}
The second inequality has positive right-hand side for all possible $r$ (i.e., $r \geq 1$). Furthermore:
\begin{corollary}
The set of Ramanujan primes for a unitary self-dual cuspidal automorphic representation with non-trivial central character has a lower Dirichlet density of at least 1/4. 
\end{corollary}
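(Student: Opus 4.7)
The plan is to derive the corollary almost entirely by specialising Theorem~\ref{t2} to $r=1$, and then to handle the type (T) case by inspection. First observe that since $\Pi$ is self-dual, its central character is either trivial or quadratic, so the assumption of non-trivial central character places us in the quadratic central character case.

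Assuming $\Pi$ is not of type (T), I would apply the second inequality of Theorem~\ref{t2} at $r=1$, so that $m = r + 1/r = 2$. Then
\begin{align*}
\frac{1}{2} - \min\!\left\{\frac{1}{2}, \frac{2}{(m-1)^2}\right\} = \frac{1}{2} - \min\!\left\{\frac{1}{2}, 2\right\} = 0,
\end{align*}
while the second term gives $\frac{1}{2} - \frac{1}{m^2} = \frac{1}{2} - \frac{1}{4} = \frac{1}{4}$. Hence $\underline{\delta}(S(1)) \geq 1/4$, and $S(1)$ is exactly the set of Ramanujan primes.

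It remains to treat the type (T) representations with quadratic central character, which I would handle by going through the three cases in the definition. If $\Pi$ is automorphically induced from a cuspidal GL(2) representation $\pi$ over a quadratic extension, then at primes unramified in the extension, the Satake parameters of $\Pi$ are built from those of $\pi$ (and its Galois twist); by the Kim--Shahidi result that the Ramanujan primes of $\pi$ have lower Dirichlet density $34/35$, the corresponding density for $\Pi$ comfortably exceeds $1/4$. If $\Pi$ is an Asai lift of a dihedral representation, then the inducing dihedral representation has tempered Satake parameters at every unramified prime, and the Asai construction produces parameters that are products of these, so all unramified primes are Ramanujan. The tensor product of two such representations is analysed identically, since the Satake parameters are products of parameters of absolute value $1$. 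In each sub-case the density of Ramanujan primes is at least $1/4$, so combining with the non-(T) case completes the proof. The main (minor) obstacle is just being careful that the type (T) analysis, which is presumably packaged in Theorem~\ref{t3}, really does give a density at least $1/4$ in the specific self-dual quadratic-central-character case; but since temperedness is automatic for dihedral inputs and propagates through induction and tensor product, this is routine.
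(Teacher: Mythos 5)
Your proof is correct and follows essentially the same route the paper intends: specialise the quadratic-central-character bound of Theorem~\ref{t2} at $r=1$, so $m=2$, which makes the first bracket vanish and the second equal $\tfrac{1}{2}-\tfrac{1}{4}=\tfrac14$; then invoke Theorem~\ref{t3} for the type (T) representations, whose densities $33/70$, $33/35$, and $1$ each exceed $1/4$. The $r=1$ computation is carried out correctly, and the observation that a self-dual representation has central character either trivial or quadratic, placing the hypothesis squarely in the second inequality of Theorem~\ref{t2}, is exactly the right reduction.

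One small inaccuracy worth flagging: in the type (T) case, the Langlands tensor product in the definition is $\pi_1\boxtimes\pi_2$ with $\pi_1,\pi_2$ arbitrary cuspidal GL(2) representations, not necessarily dihedral, so it is not true that the Satake parameters of $\Pi$ are products of absolute-value-$1$ numbers at every unramified prime. The correct statement, which is what the paper actually uses in the proof of Theorem~\ref{t3}, is that by Kim--Shahidi each factor is Ramanujan on a set of density $\geq 34/35$, so the intersection has density $\geq 33/35$. Since you ultimately defer to Theorem~\ref{t3} for the quantitative bound, and since $33/35>1/4$, this slip does not affect the validity of your argument.
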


The cases excluded from the theorems above, namely those of type (T), can be easily handled using results from GL(2).
\begin{theorem}\label{t3}
  Let $S = S (1)$ be defined as above. 
If $\Pi$ is a unitary cuspidal automorphic representation $\Pi$ for GL(4)/$F$ that is of type (T), then 
 \begin{align*}
\underline{\delta}(S) \geq \frac{33}{70}.
 \end{align*}
In particular:
\begin{itemize}
  \item If $\Pi$ is an Asai transfer of a dihedral representation for GL(2), 
then we have $\underline{\delta}(S) = 1$.
  \item If $\Pi$ is automorphically induced from GL(2), then $\underline{\delta}(S) \geq 33/70$.
  \item If $\Pi$ is the automorphic tensor product of two cuspidal automorphic representations for GL(2), then $\underline{\delta}(S) \geq 33/35$.
\end{itemize}
\end{theorem}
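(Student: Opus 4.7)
My plan is to handle each of the three items in the definition of type (T) separately, in each case reducing to the Kim--Shahidi bound~\cite{KS02} stating that a unitary cuspidal GL(2)-representation over any number field is Ramanujan on a set of places of lower Dirichlet density at least $34/35$. For the Asai case, a dihedral cuspidal GL(2)-representation $\sigma$ over $K$ is automorphically induced from a unitary Hecke character of a quadratic extension of $K$, so its Satake parameters at every unramified place of $K$ lie on the unit circle. The Satake parameters of the Asai transfer at an unramified place of $F$ are given by polynomial expressions in those of $\sigma$ at the places of $K$ above, so they also lie on the unit circle, yielding $\underline{\delta}(S) = 1$.

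For the Langlands tensor product $\Pi = \pi_1 \boxtimes \pi_2$, the Satake parameters of $\Pi_v$ are the products $\alpha_{i,v}(\pi_1)\,\alpha_{j,v}(\pi_2)$. The unitarity of the central characters of $\pi_1$ and $\pi_2$ gives $|\alpha_{1,v}(\pi_k)|\cdot|\alpha_{2,v}(\pi_k)| = 1$ for $k = 1,2$, from which a short argument shows that $\Pi$ is Ramanujan at $v$ iff both $\pi_1$ and $\pi_2$ are Ramanujan at $v$. Applying Kim--Shahidi to each factor then yields $\underline{\delta}(S) \geq 1 - 2/35 = 33/35$.

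For the automorphic induction $\Pi = I_K^F(\sigma)$ with $K/F$ quadratic, a direct computation with induced Weil-group representations shows that at a split place $v = w_1 w_2$ the Satake parameters of $\Pi_v$ form the union of those of $\sigma_{w_1}$ and $\sigma_{w_2}$, while at an inert place $v = w$ they are $\{\pm\sqrt{\alpha},\pm\sqrt{\beta}\}$, where $\{\alpha,\beta\}$ are the Satake parameters of $\sigma_w$. In either case $\Pi$ is Ramanujan at $v$ iff $\sigma$ is Ramanujan at every $w \mid v$. Let $T$ denote the non-Ramanujan places of $\sigma$ in $K$, so $\overline{\delta}_K(T) \leq 1/35$. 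For split $v$, each bad $v$ contributes at least one $w \in T$ with $Nw = Nv$, so a termwise comparison of Dirichlet series gives $\overline{\delta}_F(\text{bad split } v) \leq 1/35$. For inert $v$, however, the corresponding $w$ has $Nw = Nv^2$, so the $K$-density is essentially blind to inert places; one must use the worst-case bound $\overline{\delta}_F(\text{bad inert } v) \leq 1/2$ coming from Chebotarev. Combining, $\overline{\delta}_F(\text{bad}) \leq 1/35 + 1/2 = 37/70$, whence $\underline{\delta}(S) \geq 33/70$.

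The main obstacle is precisely this inert-place loss in the automorphic induction subcase: the Kim--Shahidi density bound over $K$ cannot detect inert places of $F$ (their Dirichlet-series contribution decays like $Nv^{-2s}$), so all inert places of $F$ must be permitted to be bad. This is the source of the overall bound $33/70$, which is realized exactly in the automorphic induction subcase and is therefore the minimum of the three.
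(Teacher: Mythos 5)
Your proof is correct and follows essentially the same three-case strategy as the paper: reduce to the Kim--Shahidi $34/35$ bound for GL(2), observe that the Asai transfer of a dihedral representation is Ramanujan everywhere, lose $1/35$ per GL(2) factor in the tensor-product case, and accept the unavoidable density-$1/2$ blindness at inert places in the automorphic-induction case. One cosmetic point: in the Asai case, ``polynomial expressions'' should read ``monomial expressions'' --- it is the product/root structure of the Asai parameters in terms of the dihedral ones (not general polynomiality) that guarantees unit modulus.
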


Our proofs make use of a number of instances of functoriality, namely automorphic induction, Asai lifting, the automorphic tensor product, and, most importantly, the automorphy of the exterior square for GL(4) from~\cite{Ki03} (along with the cuspidality criterion of Asgari--Raghuram~\cite{AR11}). For Theorems~\ref{t1} and~\ref{t2} in particular, we make use of~\cite{Ra04b} to determine the initial approach, functoriality of the exterior square lift to find the likelihood of various types of parametrizations of Langlands conjugacy classes, and~\cite{KS02,Ra97} to then establish density bounds on various sets. \\

We then turn to the case of unitary cuspidal automorphic representations for GL(3). We will show:
\begin{theorem} \label{t4}
Let $\pi$ be a cuspidal automorphic representation for GL(3)/$F$, where $F$ is a number field.
Fix $r \geq 1$, and denote by $S_r$ the set of primes $p$ where 
\begin{align*}
  \left(\frac{r + 1}{2}\right) - \sqrt{\left(\frac{r + 1}{2}\right)^2 -1}  \leq |\alpha_{i,p}| \leq   \left(\frac{r + 1}{2}\right) + \sqrt{\left(\frac{r + 1}{2}\right)^2 -1}
\end{align*}
for all $i$.
Then 
  \begin{align*}
    \underline{\delta}(S_r) \geq 1 - \frac{1}{r^2}.
  \end{align*}
\end{theorem}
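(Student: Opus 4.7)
The plan is to exploit the Hermitian structure of the Satake parameters of a unitary cuspidal representation of GL(3) at unramified places, together with a Chebyshev-style estimate applied to the Rankin--Selberg L-function $L(s, \pi \times \widetilde{\pi})$.

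First, at every unramified place $v$, unitarity of $\pi_v$ yields $\widetilde{\pi}_v \simeq \overline{\pi}_v$, which on Satake parameters gives the multiset identity $\{\alpha_{i,v}\}_i = \{1/\overline{\alpha_{i,v}}\}_i$. The implicit involution on three elements must have a fixed point, so after relabelling we may assume $|\alpha_{3,v}| = 1$ and $\alpha_{2,v} = 1/\overline{\alpha_{1,v}}$ (hence $|\alpha_{1,v}|\,|\alpha_{2,v}| = 1$), with $|\alpha_{1,v}| \geq 1$. Setting $\rho := \tfrac{r+1}{2} + \sqrt{(\tfrac{r+1}{2})^2 - 1}$, one checks $\rho + 1/\rho = r+1$, and in these coordinates the condition $v \in S_r$ reduces to the single inequality $|\alpha_{1,v}| \leq \rho$.

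Next, I would use the phase-alignment identity
\begin{align*}
\alpha_{1,v} + \tfrac{1}{\overline{\alpha_{1,v}}} \;=\; \frac{\alpha_{1,v}}{|\alpha_{1,v}|}\Big(|\alpha_{1,v}| + \tfrac{1}{|\alpha_{1,v}|}\Big),
\end{align*}
which, combined with $|\alpha_{3,v}| = 1$ and the reverse triangle inequality, yields
\begin{align*}
|a_v(\pi)| \;\geq\; |\alpha_{1,v}| + \tfrac{1}{|\alpha_{1,v}|} - 1.
\end{align*}
If $v \notin S_r$, i.e.\ $|\alpha_{1,v}| > \rho$, monotonicity of $x \mapsto x + 1/x$ on $[1, \infty)$ forces $|a_v(\pi)| > \rho + 1/\rho - 1 = r$, so $|a_v(\pi)|^2 > r^2$.

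Finally, since $\pi$ is cuspidal the L-function $L(s, \pi \times \widetilde{\pi})$ has a simple pole at $s = 1$, giving the standard asymptotic
\begin{align*}
\sum_v \frac{|a_v(\pi)|^2}{Nv^s} \;=\; \log\tfrac{1}{s-1} + O(1) \qquad (s \to 1^+).
\end{align*}
A Chebyshev-type estimate in the style of Ramakrishnan~\cite{Ra97}, applied to the set $\{v : |a_v(\pi)|^2 > r^2\} \supseteq S_r^c$, bounds its upper Dirichlet density by $1/r^2$, yielding the claimed inequality $\underline{\delta}(S_r) \geq 1 - 1/r^2$. The key GL(3)-specific ingredient is the structural observation that at least one Satake parameter lies on the unit circle while the other two are reciprocal in modulus; once this is in hand the remainder of the argument is entirely elementary.
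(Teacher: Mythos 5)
Your proposal is correct and follows essentially the same path as the paper: unitarity forces the multiset of Satake parameters to be stable under $z\mapsto 1/\overline{z}$, so at a non-Ramanujan place one parameter is unitary and the other two share a common phase with reciprocal moduli; the reverse triangle inequality then gives $|a_v(\pi)|\geq|\alpha_{1,v}|+|\alpha_{1,v}|^{-1}-1$, and Theorem~\ref{dr} (Ramakrishnan's Rankin--Selberg Chebyshev bound) delivers the density. Your ``phase-alignment identity'' is just an explicit restatement of the paper's observation that the two non-unitary parameters can be written $uq^t, uq^{-t}$ for a single unitary $u$.
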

\begin{remark}
Note that as $r \rightarrow 1$, the upper and lower bounds on $\alpha_{i,p}$ both tend to 1, and the lower bound on the density tends to 0.
\end{remark}

\begin{corollary}
For $S_3$, which is the set of primes $v$ where $2 - \sqrt{3} \leq |\alpha_{i,v}|\leq 2 + \sqrt{3}$ for $i = 1,2,3$, we have
 \begin{align*}
  \underline{\delta}(S_3) \geq 8/9.\\
 \end{align*}
\end{corollary}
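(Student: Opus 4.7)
The plan is to simply specialize Theorem~\ref{t4} to the value $r = 3$ and verify that the interval and density bound stated in the corollary match those produced by the theorem. First I would compute $(r+1)/2 = 2$ and then $\sqrt{((r+1)/2)^2 - 1} = \sqrt{4 - 1} = \sqrt{3}$, so that the interval $[(r+1)/2 - \sqrt{((r+1)/2)^2 - 1},\ (r+1)/2 + \sqrt{((r+1)/2)^2 - 1}]$ appearing in the statement of Theorem~\ref{t4} becomes exactly $[2-\sqrt{3},\ 2+\sqrt{3}]$. Hence the set $S_3$ defined in the corollary coincides with the set $S_r$ of Theorem~\ref{t4} for $r = 3$.

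Next I would apply the density bound $\underline{\delta}(S_r) \geq 1 - 1/r^2$ from Theorem~\ref{t4} with $r = 3$, giving $\underline{\delta}(S_3) \geq 1 - 1/9 = 8/9$, which is the desired inequality. Since the corollary is a direct instantiation of Theorem~\ref{t4}, there is no real obstacle; the only thing to check is the arithmetic identity $(2 \pm \sqrt{3}) = (3+1)/2 \pm \sqrt{((3+1)/2)^2 - 1}$, which is immediate. (One may also observe, as a sanity check, that $(2-\sqrt{3})(2+\sqrt{3}) = 1$, consistent with the reciprocal symmetry imposed by unitarity on the Satake parameters appearing in the definition of $S_r$.)
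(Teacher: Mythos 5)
Your proposal is correct and is exactly the intended argument: the corollary is a direct specialization of Theorem~\ref{t4} at $r=3$, and the arithmetic $(3+1)/2 = 2$, $\sqrt{2^2-1} = \sqrt{3}$, $1 - 1/3^2 = 8/9$ is all that needs checking.
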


One can ask what happens if we try to apply our techniques to automorphic forms associated to general linear groups of higher rank. There are two issues that arise. 
The first is technical: As the rank $n$ increases the number of Satake parameters increases. This leads to a wider variety of possible structures for the Langlands conjugacy class (both for the automorphic representation itself as well as for functorial lifts of the automorphic representation). 
It is by no means guaranteed that enough of these conjugacy class structures have traces with unusual sizes or values, which is what we would have wanted to exploit.
The second is a deeper problem: Functoriality of any (non-trivial) symmetric power, exterior power, or adjoint lift is not known for GL(n) for any $n \geq 5$. This means that we cannot make use of the results of~\cite{Ra97} to address the hypothetical possibilities for the structure of the Langlands conjugacy class.\\

Our paper is organised as follows: In Section~\ref{prelim}, we briefly cover some background on lifts and transfers of automorphic representations as well as some theorems that will be needed later. In Section~\ref{GL4}, we address the case when the exterior square transfer of the representation is cuspidal, which we shall refer to as the `main case' for GL(4). This will prove Theorem~\ref{t1} in the case $c = 0$. In Section~\ref{esd}, we address the essentially self-dual case, proving Theorem~\ref{t1} for $c = 1$ as well as Theorem~\ref{t2}. In Section~\ref{T} we address the automorphic representations of type (T) and prove Theorem~\ref{t3}. Lastly, in Section~\ref{GL3}, we consider the analogous problem in GL(3), proving Theorem~\ref{t4}.

\subsection*{Acknowledgements.\\}
The author would like to thank D.~Ramakrishnan for directing my attention towards such problems and for having me as a visitor at the California Institute of Technology from January to February 2012. Thanks is due to the mathematics department there for providing me with a productive work environment. I would also like to thank A.~Raghuram for a useful discussion on the isobaric decomposition of the exterior square transfer, and F.~Brumley and P.~Nelson for their helpful comments on an earlier draft of this paper.

\section{Preliminaries} \label{prelim}

We will recall a categorisation of cuspidal automorphic representations for $GL_4(\A_F)$, where $F$ is a number field, arising from the work of M.~Asgari and A.~Raghuram.
Before doing so, let us briefly review the relevant lifts of cuspidal automorphic representations.

\subsection*{Exterior square.\\}
Given a cuspidal automorphic representation $\Pi$ for $GL_4(\A_F)$, there is a transfer to an automorphic representation $\eta$ for $GL_6(\A_F)$~\cite{Ki03} which corresponds almost everywhere to the exterior square map 
\begin{align*}
\Lambda^2: GL_4(\C) \rightarrow GL_6(\C) .
\end{align*}
In particular, at all unramified places $v$ we have 
\begin{align*}
\Lambda^2 (\Pi_v) \simeq \eta_v .
\end{align*}
In this paper we will abuse notation slightly and write $\eta$ as $\Lambda^2 (\Pi)$, referring to it as the exterior square transfer. The methods used here are not affected by the fact that we do not know if the exterior square correspondence holds locally for all the ramified places.

\subsection*{Asai lift and Langlands tensor product.\\}
Let $K$ be a quadratic algebra over $F$. Fix $G$ to be the $F$-group of GL(2)/$K$ and let $\pi$ be a cuspidal automorphic representation of $G(\A_F)$. Then there exists a transfer which corresponds to the $L$-group morphism $^LG \ \rightarrow \ ^LGL(4)$. When $K$ is $F \times F$, this is the Langlands tensor product, whose automorphy is established in~\cite{Ra00}. When $K$ is a quadratic field extension of $F$, this is known as the Asai transfer, whose automorphy is proved in~\cite{Ra02} and whose cuspidality is addressed in~\cite{Ra04} and in the appendix of~\cite{Kr12}.\\

\subsection*{Automorphic Induction.\\}
Given a cuspidal automorphic representation $\pi$ for $GL_n(\A_E)$ and a cyclic extension $E / F$ of prime degree $k$, then there exists an automorphic representation ${\rm AI}(\pi)$ for $GL_{nk}(\A_F)$ such that $L(s, {\rm AI}(\pi)) = L(s, \pi)$ and, in particular, 
\begin{align*}
  L(s, {\rm AI}(\pi)_v) = \prod_{w \mid v} L(s, \pi_w)
\end{align*}
for places $v$ of $F$ and $w$ of $E$.

We say that ${\rm AI}(\pi)$ is the automorphic induction of $\pi$ with respect to the extension $E / F$; one knows of its automorphy and of a cuspidality criterion due to~\cite{AC89}.\\

We will make repeated use of the following two theorems, the first providing a cuspidality criterion for the exterior square transfer.

\begin{theorem}[Asgari--Raghuram~\cite{AR11}] \label{ar}
  The following are equivalent:\\
(i) $\Lambda ^2 \Pi$ is not cuspidal\\
(ii) $\Pi$ is one of the following:\\
(a) $\pi_1 \boxtimes \pi_2$, where $\pi_1, \pi_2$ are cuspidal automorphic representations for $GL_2(\A_F)$.\\ 
(b) ${\rm As}(\pi)$, the Asai transfer of a dihedral cuspidal automorphic representation $\pi$ for $GL_2(\A_E)$, where $E$ is a quadratic extension of $F$.\\
(c) a functorial transfer of a cuspidal automorphic representation $\pi$ for $GSp_4(\A_F)$ (that is, it is of symplectic type).\\
(d) the automorphic induction of a cuspidal automorphic representation $\pi$ for $GL_2(\A_E)$, where $E$ is again a quadratic extension of $F$. We denote this as $\Pi = I^F_E (\pi)$.\\
(iii) $\Pi$ is either \\
(a) essentially self-dual and not the Asai transfer of a non-dihedral cuspidal automorphic representation\\
or\\
(b) admits a (non-trivial) self-twist.\\\\
Note that (ii) (a) - (c) is equivalent to (iii) (a), and (ii) (d) is equivalent to (iii) (b). 
\end{theorem}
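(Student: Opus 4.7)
The strategy rests on three pillars: the analytic theory of the partial exterior-square $L$-function, the isobaric classification of non-cuspidal automorphic representations, and the dictionary between essentially self-dual cusp forms and their functorial origins. I would first handle the easy direction (ii) $\Rightarrow$ (i) by computing $\Lambda^2 \Pi$ case by case at unramified places. For $\Pi = \pi_1 \boxtimes \pi_2$, the elementary identity $\Lambda^2(V_1 \otimes V_2) = \mathrm{Sym}^2 V_1 \otimes \Lambda^2 V_2 \oplus \Lambda^2 V_1 \otimes \mathrm{Sym}^2 V_2$ applied at unramified places yields
\[
\Lambda^2(\pi_1 \boxtimes \pi_2) \simeq (\mathrm{Sym}^2 \pi_1 \otimes \omega_{\pi_2}) \boxplus (\omega_{\pi_1} \otimes \mathrm{Sym}^2 \pi_2),
\]
which is manifestly non-cuspidal. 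The symplectic case (c) produces an invariant line in $\Lambda^2 \Pi$, hence a one-dimensional summand; cases (b) and (d) produce decompositions governed by the underlying quadratic extension, verified by tracing Satake parameters through the corresponding $L$-group maps.

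For the deep direction (i) $\Rightarrow$ (ii), I would analyse the isobaric decomposition $\Lambda^2 \Pi \simeq \boxplus_j \sigma_j$ supplied by Jacquet--Shalika together with Kim's functoriality. The key analytic input is that $L^S(s, \Pi, \Lambda^2)$ has a pole at $s=1$ precisely when $\Pi$ is of symplectic type (i.e.\ descends from $GSp_4$), settling case (c) immediately. If no such pole occurs, one examines the poles of partial Rankin--Selberg $L$-functions $L^S(s, \sigma_i \times \widetilde{\sigma_j})$ to constrain the ranks of the $\sigma_j$, and multiplicity-one arguments then match $\Pi$ with one of the remaining sources (a), (b), (d).

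The equivalence (ii) $\Leftrightarrow$ (iii) is structural. Cases (ii)(a), (b), (c) are essentially self-dual by construction, and Asai lifts of non-dihedral forms are explicitly excluded in (iii)(a); conversely, an essentially self-dual cuspidal $\Pi$ whose $L$-parameter lands in a symplectic or orthogonal subgroup of $GL_4(\C)$ must fit into one of these three cases via the Arthur--Ginzburg--Rallis--Soudry descent picture, with the orthogonal type corresponding precisely to Asai transfers. The equivalence (ii)(d) $\Leftrightarrow$ (iii)(b) is the classical Arthur--Clozel fact that a cuspidal representation admits a non-trivial self-twist by a quadratic character if and only if it is automorphically induced from the corresponding quadratic extension.

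The principal obstacle is the direction (i) $\Rightarrow$ (ii): once the symplectic case is handled via the pole of $L^S(s,\Pi,\Lambda^2)$, one must rule out exotic isobaric shapes for $\Lambda^2 \Pi$ and force the remaining decompositions into one of the forms (a), (b), or (d). This demands delicate Rankin--Selberg bookkeeping together with careful tracking of central characters to distinguish the various functorial sources, and it is where the Asgari--Raghuram argument makes its main contribution.
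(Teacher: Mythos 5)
The paper does not prove this theorem: it is quoted verbatim from Asgari--Raghuram~\cite{AR11} and used as a black box, so there is no internal argument to measure your attempt against. Evaluating your sketch on its own terms, the high-level outline is plausible and a couple of ingredients are genuinely present in the Asgari--Raghuram argument (the plethysm identity $\Lambda^2(V_1 \otimes V_2) \simeq (\mathrm{Sym}^2 V_1 \otimes \Lambda^2 V_2) \oplus (\Lambda^2 V_1 \otimes \mathrm{Sym}^2 V_2)$ and the fact that a non-trivial self-twist forces $\Pi$ to be automorphically induced), but the deep direction (i) $\Rightarrow$ (ii) is where the real content lies, and your sketch does not close it.

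The specific gap: the pole of $L^S(s, \Pi, \Lambda^2)$ at $s=1$ detects only the presence of the trivial character as a summand of $\Lambda^2 \Pi$, i.e.\ the case where $\Pi$ is self-dual symplectic with trivial similitude. But $\Lambda^2 \Pi$ can be non-cuspidal without this pole occurring --- indeed all of cases (a), (b), (d), and the essentially-but-not-self-dual symplectic case, fall into that bucket. Your sketch waves at ``delicate Rankin--Selberg bookkeeping'' for these, but this is precisely the hard part and must actually be carried out: one has to classify the possible isobaric shapes $4+1+1$, $3+3$, $4+2$, $2+2+2$, etc., and for each shape use Rankin--Selberg orthogonality together with the factorization $L(s, \Pi \times \widetilde{\Pi}) = L(s, \Pi, \mathrm{Sym}^2) L(s, \Pi, \Lambda^2)$ and twists thereof to force $\Pi$ into one of the four types. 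Without that, (i) $\Rightarrow$ (ii) is an assertion, not an argument.

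A second, subtler issue is your treatment of (iii)(a). You write that an essentially self-dual $\Pi$ with symplectic or orthogonal parameter ``must fit into one of these three cases,'' with orthogonal type ``corresponding precisely to Asai transfers.'' But the orthogonal type splits: $GSO_4 \cong (GL_2 \times GL_2)/\mathbb{G}_m$ gives Langlands tensor products $\pi_1 \boxtimes \pi_2$, while the non-split form of $GO_4$ gives Asai transfers, and the Asai transfer of a \emph{non-dihedral} $\pi$ is essentially self-dual yet has \emph{cuspidal} exterior square. That is exactly why (iii)(a) carries the exclusion clause. So ``essentially self-dual $\Rightarrow$ $\Lambda^2 \Pi$ non-cuspidal'' is false, and your sketch needs to explain, rather than merely cite, why dihedral vs.\ non-dihedral is the correct dividing line for Asai transfers. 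This requires tracing the isobaric decomposition of $\Lambda^2(\mathrm{As}(\pi))$ explicitly and observing that its GL(2)-type summand is cuspidal precisely when $\pi$ is non-dihedral.

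Finally, one small point in your favor: the equivalence (ii)(d) $\Leftrightarrow$ (iii)(b) does hold even when the self-twist character has order $4$ rather than $2$, because the degree-$4$ cyclic extension it cuts out contains a quadratic subfield and induction in stages realizes $\Pi$ as $I^F_E(\pi)$ with $E/F$ quadratic --- this is worth saying explicitly, since on GL(4) one cannot assume a priori that a self-twist character is quadratic.
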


\begin{remark}
If $\Pi$ is of type (c), then the exterior square is not cuspidal, and its isobaric decomposition is of the form 
\begin{align*}
\Lambda ^2 \Pi \simeq \widetilde{r_5} \boxplus \omega,
\end{align*}
where $\widetilde{r_5}$ is an automorphic representation of $GL_5(\A_F)$ and $\omega$ is a Hecke character.
\end{remark}

The second theorem provides upper bounds on the occurrence of large Hecke eigenvalues.
\begin{theorem}[Ramakrishnan~\cite{Ra97}]\label{dr}
Let $\pi$ be a cuspidal automorphic representation for GL(n). Then 
\begin{align*}
  \underline{\delta}(\{v \mid |a_v(\pi)|\leq k\}) \geq 1-\frac{1}{k^2}
\end{align*}
where $\underline{\delta}$ is the lower Dirichlet density.
\end{theorem}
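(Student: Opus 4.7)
The natural approach is to exploit the Rankin--Selberg $L$-function $L(s,\pi\times\widetilde\pi)$, whose analytic behaviour encodes the mean square of the Hecke eigenvalues $a_v(\pi) = \alpha_{1,v} + \cdots + \alpha_{n,v}$. By Jacquet--Shalika, $L(s,\pi\times\widetilde\pi)$ is holomorphic and non-vanishing on $\mathrm{Re}(s) \geq 1$ except for a simple pole at $s=1$. Taking the logarithm and isolating the contribution from the unramified finite places of degree one, one obtains an asymptotic of the form
\begin{align*}
\sum_{v} \frac{|a_v(\pi)|^2}{Nv^{s}} \;=\; \log\frac{1}{s-1} \;+\; O(1) \qquad (s \to 1^+),
\end{align*}
since the higher prime-power terms and ramified contributions are absolutely convergent on $\mathrm{Re}(s) > 1/2$.

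The plan is then to compare this against the sum restricted to the exceptional set $T = \{v : |a_v(\pi)| > k\}$. On $T$ one has $|a_v(\pi)|^2 > k^2$, so
\begin{align*}
k^{2}\sum_{v \in T} \frac{1}{Nv^{s}} \;<\; \sum_{v\in T} \frac{|a_v(\pi)|^2}{Nv^{s}} \;\leq\; \sum_{v} \frac{|a_v(\pi)|^2}{Nv^{s}} \;=\; \log\frac{1}{s-1} + O(1).
\end{align*}
Dividing by $\log\frac{1}{s-1}$ and letting $s \to 1^{+}$ shows that the upper Dirichlet density of $T$ is at most $1/k^{2}$. Taking complements, the set $\{v : |a_v(\pi)| \leq k\}$ has lower Dirichlet density at least $1 - 1/k^{2}$, which is the desired bound.

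The only non-routine input is the fact that $L(s,\pi \times \widetilde\pi)$ has a simple pole at $s=1$ with the correct residue to produce the leading term $\log\frac{1}{s-1}$ with coefficient exactly $1$; this is standard from the theory of Rankin--Selberg convolutions (Jacquet--Piatetski-Shapiro--Shalika, Moeglin--Waldspurger). All other manipulations are elementary tail estimates, and the step that requires some care is justifying that the prime-power terms $k \geq 2$ in $\log L(s,\pi\times\widetilde\pi)$ and the ramified local factors contribute only $O(1)$ on $\mathrm{Re}(s) > 1/2$, which follows from the known bounds towards Ramanujan (e.g.\ Luo--Rudnick--Sarnak) ensuring absolute convergence. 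I do not anticipate any genuine obstacle; the argument is a clean application of Landau--style Tauberian reasoning to the Rankin--Selberg $L$-function.
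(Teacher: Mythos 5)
Your approach is essentially the one in Ramakrishnan's paper \cite{Ra97}, which the present paper simply cites rather than reproves. The skeleton is right: pass to $L(s,\pi\times\widetilde\pi)$, use its simple pole at $s=1$ (Jacquet--Shalika) to get $\log L(s,\pi\times\widetilde\pi) = \log\frac{1}{s-1}+O(1)$ as $s\to 1^+$, dominate $k^2\sum_{v\in T}Nv^{-s}$ by $\sum_v |a_v(\pi)|^2 Nv^{-s}$, divide by $\log\frac{1}{s-1}$, and read off $\overline\delta(T)\le 1/k^2$.

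The one step you should not wave away is the claim that the higher prime-power terms in $\log L(s,\pi\times\widetilde\pi)$ are $O(1)$ because of absolute convergence on $\mathrm{Re}(s)>1/2$ via Luo--Rudnick--Sarnak. That is false in general for $n\ge 3$: with $|\alpha_{i,v}|\le Nv^{\theta}$ and $\theta=\tfrac12-\tfrac1{n^2+1}$, the $m=2$ contribution to the logarithm is of size roughly $Nv^{4\theta-2s}$, and $4\theta-2>-1$ already for $n=3$, so these terms need not converge at $s=1$. The correct (and simpler) fix --- which is what Ramakrishnan actually uses --- is positivity: for unramified $v$ the $m$-th term in $\log L_v(s,\pi\times\widetilde\pi)$ equals $\frac{1}{m}\bigl|\sum_i \alpha_{i,v}^m\bigr|^2 Nv^{-ms}\ge 0$, so dropping all $m\ge 2$ terms gives the one-sided inequality
\begin{align*}
\sum_{v\,\mathrm{unram}}\frac{|a_v(\pi)|^2}{Nv^s}\ \le\ \log L(s,\pi\times\widetilde\pi)+O(1),
\end{align*}
which is all the argument needs. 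You do not need any bound towards Ramanujan beyond Jacquet--Shalika, and you do not need an asymptotic equality --- only this inequality. With that replacement your proof is correct and matches the source.
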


\section{Main case for GL(4)}\label{GL4}

Here we prove Theorem~\ref{t1} for the case when $c = 0$, which we recall here:
\begin{thm}
  Let $\Pi$ be a unitary cuspidal automorphic representation for GL(4)/$F$ that is not of type (T) and that is not essentially self-dual. For real $r \geq 1$, we denote by $S = S(r)$ the set of primes $v$ at which $\Pi$ is unramified and $1/r \leq |\alpha_{i,v}|\leq r$ for all $i$, and let $m = r + 1/r$.

 Then 
  \begin{align*}
    \underline{\delta}(S) \geq 1- \left( \frac{1}{(m-2)^2} + \frac{1}{(2m-2)^2} \right),
  \end{align*}
where $\underline{\delta}(S)$ denotes the lower Dirichlet density of the set $S$.
\end{thm}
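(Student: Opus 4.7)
My plan rests on the observation that the hypotheses ``$\Pi$ not of type (T) and not essentially self-dual'' rule out every case in Theorem~\ref{ar}, so by Asgari--Raghuram, $\Lambda^2 \Pi$ is itself a cuspidal automorphic representation of $GL_6(\A_F)$. Theorem~\ref{dr} then applies to both $\Pi$ and $\Lambda^2 \Pi$, yielding $\overline{\delta}(T_1) \leq 1/(m-2)^2$ and $\overline{\delta}(T_2) \leq 1/(2m-2)^2$ for
\begin{align*}
T_1 = \{v \mid |a_v(\Pi)| > m-2\}, \qquad T_2 = \{v \mid |a_v(\Lambda^2 \Pi)| > 2m-2\}.
\end{align*}
Since $a_v(\Pi)$ and $a_v(\Lambda^2 \Pi)$ are precisely the first and second elementary symmetric polynomials $e_1, e_2$ in the Satake parameters $\alpha_{i,v}$, the claimed bound will follow by subadditivity of upper Dirichlet density once we show $S^c \subseteq T_1 \cup T_2$.

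The analysis of $S^c$ is where the structure of the Langlands conjugacy class enters. Unitarity of $\Pi$ gives $\widetilde{\Pi}_v \simeq \overline{\Pi}_v$, hence a permutation $\sigma$ on Satake parameters with $\alpha_{\sigma(i),v} = 1/\overline{\alpha_{i,v}}$. This involution has one of three cycle types on four elements: (i) all fixed (so all $|\alpha_{i,v}| = 1$ and $v \in S$), (ii) two fixed points and a single $2$-cycle, or (iii) two $2$-cycles. Thus if $v \notin S$ we are in case (ii) or (iii), and after relabeling some paired parameter $\alpha_a$ has modulus $R > r$. The pairing forces $\alpha_b = 1/\overline{\alpha_a}$ to share the argument of $\alpha_a$, so $\alpha_a + \alpha_b = (R + 1/R) e^{i\phi}$ has modulus strictly greater than $m$.

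In case (ii), the remaining two parameters have modulus $1$, and the triangle inequality applied to $e_1 = (\alpha_a + \alpha_b) + (\alpha_c + \alpha_d)$ gives $|e_1| > m - 2$, so $v \in T_1$. In case (iii), I use the identity
\begin{align*}
e_2 = (\alpha_a + \alpha_b)(\alpha_c + \alpha_d) + \alpha_a\alpha_b + \alpha_c\alpha_d;
\end{align*}
the second pair satisfies $|\alpha_c + \alpha_d| \geq 2$ by the same argument-sharing principle, while the unitary pairing forces $|\alpha_a\alpha_b| = |\alpha_c\alpha_d| = 1$, yielding $|e_2| > 2m - 2$ and hence $v \in T_2$.

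The main obstacle is organising the case analysis of $\sigma$ cleanly and exhaustively --- in particular verifying that coincidence of moduli (where the pairing itself need not be unique) does not create a scenario slipping past both $T_1$ and $T_2$, and ensuring the elementary symmetric identities deliver uniform lower bounds. Once the inclusion $S^c \subseteq T_1 \cup T_2$ is established, combining it with the two Ramakrishnan density estimates via $\underline{\delta}(S) \geq 1 - \overline{\delta}(T_1) - \overline{\delta}(T_2)$ immediately produces the desired bound.
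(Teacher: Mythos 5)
Your proposal is correct and takes essentially the same approach as the paper: split into the case of one non-unitary pair (bounded via $|a_v(\Pi)| > m-2$) and two non-unitary pairs (bounded via the exterior square trace $|a_v(\Lambda^2\Pi)| > 2m-2$, cuspidality of $\Lambda^2\Pi$ coming from Asgari--Raghuram), then apply Ramakrishnan's density estimate to each. Your $e_2$-identity computation is precisely the trace computation the paper performs for $A_q(\Lambda^2\Pi)$, so the two arguments coincide.
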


We will abuse notation slightly by letting $q$ denote both a prime of $F$ at which $\Pi$ is unramified, as well as the norm of this prime. The distinction should be clear from context.

Let $q$ be a prime at which $\Pi$ is unramified. Associated to $q$ is a conjugacy class, which we will represent by the set $A_q (\Pi) = \{\alpha, \beta, \gamma, \delta\}$ of Satake parameters. We denote the trace of this conjugacy class as $a_q(\pi)$. If $q$ is not a Ramanujan prime, then since $\Pi$ is unitary, we know that one of the parameters, $\alpha$ (say), has absolute value greater than one. We write $\alpha = u q ^t$, where $u$ is unitary and $t$ is positive.

Now $\overline{\Pi} \simeq \widetilde{\Pi}$ implies 
\begin{align*}
  A_q(\overline{\Pi}) = \{\overline{\alpha}, \overline{\beta}, \overline{\gamma}, \overline{\delta}\} = \{\alpha ^{-1}, \beta ^{-1}, \gamma ^{-1}, \delta ^{-1}\} = A_q(\widetilde{\Pi})
\end{align*}

Without loss of generality, let us say that $\beta = u q ^{-t}$.\\
There are two cases:
\begin{itemize}
  \item  \textbf{Case I.}
If $\gamma$ has absolute value 1, then so does $\delta$. Then
\begin{align*}
  A_q(\Pi) = \{u q^t , u q^{-t}, \gamma, \delta\}. 
\end{align*}
\item  \textbf{Case II.}
If $\gamma$ does not have absolute value 1, then nor does $\delta$, and we will write $\gamma = v q^s $ and $\delta = v q^{-s}$, where $v$ is unitary and $s > 0$. This gives
\begin{align*}
  A_q(\Pi) = \{u q^t, u q^{-t}, v q^s, v q^{-s}\}.
\end{align*}
\end{itemize}

Fix $r > 1$ and let $T=T(r)$ be the set of primes $q$ at which $\Pi$ is unramified and where $|\alpha_{q,i}| > r$ for some $i$. We wish to impose an upper bound on the size of $T$. Let us say that $q \in T$. Then, in Case I: 
\begin{align*}
  |a_q(\Pi)| = |uq^t + uq^{-t} + \gamma + \delta| > m-2,
\end{align*}
where $m:= r + \frac{1}{r}$. Since $\Pi$ is cuspidal, from~\cite{Ra97} we see that Case I can only occur for a set of density at most
\begin{align*} 
\frac{1}{(m-2)^2}.
\end{align*}

For Case II, we turn to the exterior square lift of $\Pi$:
\begin{align*}
 A_q(\Lambda ^2, \Pi) =& \{u ^2, uv q ^{t + s}, uv q ^{t-s}, uv q ^{-t+s}, uv q ^{-t-s}, v ^2\}
\end{align*}
so 
\begin{align*}
   |a_q(\Lambda ^2, \Pi)| =& |u ^2 + v ^2 + uv Q _t Q_s| > 2 (m-1)
\end{align*}
where $Q_a := q^a + q^{-a}$, for any real $a$.

Since $\Pi$ is not essentially self-dual and does not admit a self-twist, by Theorem~\ref{ar} we know that $\Lambda^2 \Pi$ is cuspidal. Thus Case II can only occur for a set of density at most
\begin{align*}
\frac{1}{4 (m-1)^2}.
\end{align*}

Combining the density estimates from both cases, we obtain 
\begin{align*}
  \overline{\delta}(T) \leq \frac{1}{(m-2)^2} + \frac{1}{4 (m-1)^2}.
\end{align*}

\section{Essentially self-dual GL(4)}\label{esd}

In this section we prove Theorem~\ref{t1} in the case when $c = 1$:
\begin{thm}
  Let $\Pi$ be an essentially self-dual unitary cuspidal automorphic representation for GL(4)/$F$ that is not of type (T). For real $r \geq 1$, we denote by $S = S(r)$ the set of primes $v$ at which $\Pi$ is unramified and $1/r \leq |\alpha_{i,v}|\leq r$ for all $i$, and let $m = r + 1/r$. 

 Then 
  \begin{align*}
    \underline{\delta}(S) \geq 1- \left( \frac{1}{(m-2)^2} + \frac{1}{(2m-3)^2} \right),
  \end{align*}
where $\underline{\delta}(S)$ denotes the lower Dirichlet density of the set $S$.
\end{thm}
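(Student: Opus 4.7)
The plan is to parallel the argument of Section~\ref{GL4}, partitioning the complement $T$ of $S$ into the same Case I and Case II according to whether one or both conjugate pairs of Satake parameters are non-unitary. Case I is handled identically: $|a_q(\Pi)| > m-2$, so Theorem~\ref{dr} applied to $\Pi$ itself bounds the upper density of that subset by $1/(m-2)^2$. The new input in this section is the essential self-duality, which must be used to refine Case II, and the possible non-cuspidality of $\Lambda^2\Pi$, which slightly weakens the resulting bound.

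For Case II, write $A_q(\Pi) = \{uq^t, uq^{-t}, vq^s, vq^{-s}\}$ with $u, v$ unitary and $t, s > 0$. The hypothesis $\Pi \simeq \widetilde{\Pi} \otimes \chi$ translates into the multiset equality $A_q(\Pi) = \chi(q) A_q(\widetilde{\Pi})$. Matching elements by absolute value, I would show that when $t \neq s$ the matching is rigid and forces $u^2 = v^2 = \chi(q)$, while when $t = s$ there is an extra possibility, namely $uv = \chi(q)$. In each subcase, using
\begin{align*}
a_q(\Lambda^2 \Pi) = u^2 + v^2 + uv\, Q_t Q_s,
\end{align*}
together with $Q_t > m$, $Q_s \geq 2$, and $|u| = |v| = 1$, a short calculation confirms that $|a_q(\Lambda^2 \Pi)| > 2m - 2$ uniformly across all subcases (the worst bound comes from cancellation between $u^2 + v^2$ and $uv\, Q_t Q_s$).

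Now invoke Theorem~\ref{ar}: since $\Pi$ is essentially self-dual and not of type (T), either $\Lambda^2\Pi$ is cuspidal, or $\Pi$ is of symplectic type. In the cuspidal case, Theorem~\ref{dr} directly bounds the upper density of Case II by $1/(2m-2)^2$. In the symplectic case, the isobaric decomposition $\Lambda^2 \Pi \simeq \widetilde{r_5} \boxplus \omega$ (from the remark following Theorem~\ref{ar}), together with $a_q(\Lambda^2\Pi) = a_q(\widetilde{r_5}) + \omega(q)$ and $|\omega(q)| = 1$, yields $|a_q(\widetilde{r_5})| > 2m - 3$; Theorem~\ref{dr} applied to the cuspidal $GL_5$ representation $\widetilde{r_5}$ then gives an upper density of $1/(2m-3)^2$. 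Taking the weaker of the two bounds gives $1/(2m-3)^2$ for Case II, and summing with Case I yields the theorem.

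The main obstacle is the careful bookkeeping in the matching of multisets $A_q(\Pi) = \chi(q) A_q(\widetilde{\Pi})$, especially when $t = s$, and verifying that the lower bound on $|a_q(\Lambda^2\Pi)|$ is uniform across all subcases. The loss of $1$ in the final estimate (i.e., $2m-3$ rather than $2m-2$) is an honest cost of passing to the cuspidal constituent $\widetilde{r_5}$ in the symplectic case, and it is precisely what distinguishes this $c=1$ bound from the $c=0$ bound of Section~\ref{GL4}.
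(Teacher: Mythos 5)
Your overall architecture is correct and the final bookkeeping matches the paper. In fact, your coarse Case~I/II partition (one versus both conjugate pairs non-unitary, inherited from Section~\ref{GL4}) is simpler than the paper's five-way structural analysis of $A_q(\Pi)$ in Section~\ref{esd}; the paper's finer decomposition (Cases 1, 2(a)(i), 2(a)(ii), 2(b)(i), 2(b)(ii), derived by combining unitarity with the essential self-duality constraint $A_q(\Pi\otimes\eta)=A_q(\overline{\Pi})$) is also the scaffolding for Theorem~\ref{t2}, which is why the paper sets it up at that level of detail. For Theorem~\ref{t1} with $c=1$ alone, your coarser split suffices: the triangle inequality gives $|u^2+v^2+uvQ_tQ_s|\ge Q_tQ_s-2>2m-2$ directly, so the multiset-matching subcases you sketch (rigid when $t\ne s$, extra option when $t=s$) are not actually needed for the bound. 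Your dichotomy via Theorem~\ref{ar} (either $\Lambda^2\Pi$ is cuspidal, or $\Pi$ is of symplectic type and $\Lambda^2\Pi\simeq\widetilde{r_5}\boxplus\omega$) is also correct.

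However, there is a genuine gap at the step where you apply Theorem~\ref{dr} to $\widetilde{r_5}$. You describe $\widetilde{r_5}$ as ``the cuspidal $GL_5$ representation'' and attribute this to the remark following Theorem~\ref{ar}, but that remark only asserts that $\widetilde{r_5}$ is an \emph{automorphic} representation of $GL_5(\A_F)$; it says nothing about cuspidality, and Theorem~\ref{dr} genuinely requires a cuspidal input. This is not a citation oversight but a missing argument: the paper devotes a dedicated Lemma to proving that $\widetilde{r_5}$ is cuspidal whenever $\Pi$ is of symplectic type and admits no non-trivial self-twist. The proof uses Proposition~4.2 of Asgari--Raghuram to rule out degree-two isobaric summands of $\Lambda^2\Pi$, and then analyses poles of
\[
L(s,\Pi\times\widetilde{\Pi}\otimes\chi)=L(s,\Pi,\mathrm{Sym}^2\otimes\eta\chi)\,L(s,\Lambda^2\Pi\otimes\eta\chi)
\]
together with Shahidi's non-vanishing of $L(1,\Pi,\mathrm{Sym}^2\otimes\eta\chi)$ to show the only degree-one summand of $\Lambda^2\Pi$ is $\omega=\eta^{-1}$, occurring exactly once. (The needed hypothesis that $\Pi$ has no non-trivial self-twist is available: by Theorem~\ref{ar}(iii)(b) a self-twist would force $\Pi$ to be an automorphic induction, which is of type~(T) and hence excluded.) You should state and prove this cuspidality lemma before invoking Theorem~\ref{dr} on $\widetilde{r_5}$.
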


and we also prove Theorem~\ref{t2}:
\begin{thm}
Fix a self-dual cuspidal automorphic representation $\Pi$ for GL(4)/$F$ that is not of type (T). Let $r, m$, and $S$ be as above. Then 
\begin{align*}
  \underline{\delta}(S) \geq   1 - \frac{2}{(m-1)^2} .
\end{align*}
when $\Pi$ has trivial central character, and
\begin{align*}
\underline{\delta}(S) \geq  \left(  \frac{1}{2} - {\rm min}\left\{ \frac{1}{2}, \frac{2}{(m-1)^2} \right\} \right) + \left(\frac{1}{2} - \frac{1}{m ^2} \right) .
\end{align*}
when $\Pi$ has quadratic central character.
\end{thm}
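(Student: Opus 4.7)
The plan is to refine the Case~I/II analysis of Section~\ref{GL4} by exploiting self-duality, combining Ramakrishnan's theorem (Theorem~\ref{dr}) applied to $\Pi$ with the same theorem applied to the cuspidal $GL_5$-factor of $\Lambda^2\Pi$. Since $\Pi$ is self-dual and not of type (T), Theorem~\ref{ar} forces $\Pi$ to be of symplectic type, so $\Lambda^2\Pi \simeq \widetilde{r_5} \boxplus \omega$ with $\widetilde{r_5}$ cuspidal and $\omega = \omega_\Pi$ quadratic. The decisive structural input is that self-duality ($\{\alpha_i\} = \{\alpha_i^{-1}\}$) forces the unit $u$ in Case~I to satisfy $u^2 = 1$, and the units $u,v$ in Case~II to satisfy $u^2v^2 = 1$, so $\omega(q) = 1$ automatically in Case~II.

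For the trivial central character case I would treat Cases~I and II separately. In Case~I, writing $\beta = u\,\text{Re}(\gamma) \in [-1,1]$, the identity $\gamma\delta = 1$ (forced by $\omega(q) = 1$ and $u^2 = 1$) gives $a_q(\Pi) = u(Q_t + 2\beta)$ and, after stripping off the trivial character from $\Lambda^2\Pi$, $a_q(\widetilde{r_5}) = 1 + 2\beta Q_t$. Since $Q_t > m$, I split on the sign of $\beta + \tfrac{1}{2}$: when $\beta \geq -\tfrac{1}{2}$, $|a_q(\Pi)| = Q_t + 2\beta > m-1$; when $\beta < -\tfrac{1}{2}$, $1 + 2\beta Q_t < 1 - Q_t < -(m-1)$, giving $|a_q(\widetilde{r_5})| > m-1$. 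In Case~II, a direct computation of $A_q(\Lambda^2\Pi)$ after removing the $\omega(q)=1$ piece yields $|a_q(\widetilde{r_5})| > 2m-1$ in the subcase $u,v \in \{\pm 1\}$, and $|a_q(\widetilde{r_5})| \geq Q_t^2 - 3 > m^2 - 3$ in the anomalous subcase where $t=s$ and $v = u^{-1}$; both bounds exceed $m-1$ in the nontrivial range $m > 1 + \sqrt{2}$. Consequently $T \subseteq \{|a_q(\Pi)| > m-1\} \cup \{|a_q(\widetilde{r_5})| > m-1\}$, and Theorem~\ref{dr} gives $\overline{\delta}(T) \leq 2/(m-1)^2$.

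For the quadratic central character case, I would partition the unramified primes into $P^{\pm} = \{\omega(q) = \pm 1\}$, each of Dirichlet density $\tfrac{1}{2}$. On $P^+$ the analysis above applies verbatim, so $\overline{\delta}(T \cap P^+) \leq 2/(m-1)^2$ and trivially also $\leq \tfrac{1}{2}$. On $P^-$, Case~II is impossible (since $u^2v^2 = 1 \neq -1$), so only Case~I can occur; there, $\gamma\delta = -1$ forces $\delta = -\bar\gamma$, hence $a_q(\Pi) = uQ_t + 2i\,\text{Im}(\gamma)$ and $|a_q(\Pi)|^2 = Q_t^2 + 4\,\text{Im}(\gamma)^2 > m^2$, giving $T \cap P^- \subseteq \{|a_q(\Pi)| > m\}$ of upper density at most $1/m^2$. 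Summing over the disjoint decomposition $S = (S \cap P^+) \sqcup (S \cap P^-)$ via the additivity of lower density on disjoint sets then produces the stated bound.

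The hard part is the Case~I improvement in the $\omega(q)=1$ regime: the traces $a_q(\Pi)$ and $a_q(\widetilde{r_5})$ must be recognised as a coupled pair of linear polynomials in $\beta$ (with coefficients involving $Q_t$) for which the dichotomy at $\beta = -\tfrac{1}{2}$ forces one of them to exceed $m-1$. Using $|a_q(\Pi)|$ alone only yields the weaker $> m-2$ bound from Theorem~\ref{t1}, while $|a_q(\widetilde{r_5})|$ alone can be arbitrarily small; the coupling that makes the sharper $(m-1)^2$-denominator accessible is precisely the rigidity $u^2 = 1$ imposed by self-duality.
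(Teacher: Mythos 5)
Your proposal is correct and follows essentially the same route as the paper's proof: the same coupled dichotomy on the sign of $u(\gamma + \overline{\gamma})$ (your $\beta \geq -\tfrac12$ versus $\beta < -\tfrac12$) yielding $|a_q(\Pi)| > m-1$ or $|a_q(\widetilde{r_5})| > m-1$, the same three Langlands-class shapes (your Case~I / Case~II main / Case~II anomalous match the paper's Cases II / I / III in its self-dual section), and the same partition by $\omega(q) = \pm 1$ in the quadratic-character case. The only small caveat is that cuspidality of $\widetilde{r_5}$ is not an immediate consequence of Theorem~\ref{ar}; the paper establishes it in a separate lemma using the absence of non-trivial self-twists for $\Pi$ together with Shahidi's non-vanishing of $L(s,\Pi,\mathrm{Sym}^2\otimes\chi)$ at $s=1$, so that step should be cited rather than folded into the invocation of Asgari--Raghuram.
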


When $\Pi$ is essentially self-dual, there exists a Hecke character, that we will denote by $\eta$, such that $\widetilde{\Pi} \simeq \Pi \otimes \eta$. Let $q$ be a prime at which $\Pi$ is unramified. We will again have $q$ denote both a prime of $F$ and its norm, where context will distinguish between the two.

\subsection{Structure} We would like to determine the possible structures for $A_q(\Pi)$ when $q$ is not a Ramanujan prime for $\Pi$. Let us write $A_q(\Pi) = \{\alpha, \beta,\gamma,\delta\}$; if $q$ is not Ramanujan, then without loss of generality we can write $\alpha = u q ^t$, where $u$ is unitary and $t$ is positive.

Now 
\begin{align*}
  A_q(\Pi \otimes \eta) = A_q(\widetilde{\Pi}) = A_q(\overline{\Pi})
\end{align*}
implies 
\begin{align*}
 \{u q ^t \eta, \beta \eta, \gamma \eta, \delta \eta \} = \{\overline{u}q ^{-t}, \beta ^{-1}, \gamma ^{-1}, \delta ^{-1}\} = \{\overline{u}q ^t, \overline{\beta}, \overline{\gamma}, \overline{\delta}\}
\end{align*}
where, in the last line, and from here on, $\eta = \eta_q$.

Without loss of generality we set $u q ^t \eta = \beta ^{-1}$. We now have two main options:\\
1. If $u q ^t \eta \neq \overline{u} q ^t$, then (without loss of generality) $u q ^t \eta = \overline{\gamma}$, so we now have 
\begin{align*}
\{u q ^t \eta, \overline{u}q ^{-t}, \overline{u}q ^t, \delta \eta\} = \{\overline{u}q ^{-t}, u q ^t \eta, u q ^{-t}\eta, \delta ^{-1}\} = \{\overline{u}q ^t, u q ^{-t}\eta, u q ^t \eta, \overline{\delta}\}.
\end{align*}
Therefore, $\{\overline{u}q ^t, \delta \eta\}= \{u q ^{-t} \eta, \delta ^{-1}\}$ and $\{\overline{u}q ^{-t}, \delta ^{-1}\} = \{\overline{u} q ^t , \overline{\delta}\}$. These imply $\delta = u q ^{-t}$, so 
\begin{align} \label{case1}
  A_q(\Pi) &=  \{ u q ^t, \overline{u \eta} q ^{-t}, \overline{u \eta} q ^t, u q ^{-t}\}\\
&=  \{\alpha, \alpha ^{-1} \overline{\eta}, \overline{\alpha \eta}, \overline{\alpha ^{-1}}\}. \notag 
\end{align}
2. If $u q ^t \eta = \overline{u} q ^t$, then $\eta = \overline{u}^2$ and
\begin{align*}
\{\overline{u} q ^t, \overline{u}q ^{-t}, \gamma \eta, \delta \eta\} = \{\overline{u}q ^{-t}, \overline{u} q ^t , \gamma ^{-1}, \delta ^{-1}\} = \{\overline{u}q ^t, u q ^{-t}\eta, \overline{\gamma}, \overline{\delta}\}.
\end{align*}

Either $\gamma ^{-1} = \overline{\gamma}$ or $\gamma ^{-1} = \overline{\delta}$:

(a) If $\gamma ^{-1} = \overline{\gamma}$, then $\delta ^{-1} = \overline{\delta}$, in which case both parameters are unitary.
We are left with $\{\gamma \eta, \delta \eta \} = \{\gamma ^{-1}, \delta ^{-1}\}$.

(i) If $\gamma \eta = \gamma ^{-1}$, then $\delta \eta = \delta ^{-1}$, so $\eta = \gamma ^{-2}$ and $\gamma = \pm \delta$. Thus overall we get 
\begin{align} \label{case2}
  A_q(\Pi) = \{u q ^t, \overline{u \eta}q ^{-t}, \gamma, \pm \gamma\}.
\end{align}

(ii) If $\gamma \eta = \delta ^{-1}$, then we get 
\begin{align}\label{case3}
  A_q(\Pi) = \{u q ^t, u q ^{-t}, v, \overline{v \eta}\}
\end{align}
where $v$ is unitary.\\

(b) If $\gamma ^{-1} = \overline{\delta}$ and so $\delta ^{-1} = \overline{\gamma}$, we are left with 
\begin{align*}
  \{\gamma \eta, \gamma ^{-1} \gamma\}= \{\gamma ^{-1} , \overline{\gamma}\}.
\end{align*}

(i) If $\gamma \eta = \gamma ^{-1}$ and $\overline{\gamma ^{-1}}\eta = \overline{\gamma}$, then $\gamma ^2 \eta = 1$ and so $|\gamma|= 1$. So 
\begin{align}\label{case4}
  A_q(\Pi) = \{\alpha, \alpha ^{-1} \overline{\eta}, \gamma, \gamma\},
\end{align}
where $\gamma$ is unitary.

(ii) If $\gamma \eta = \overline{\gamma}$ and $\overline{\gamma ^{-1}}\eta = \overline{\gamma ^{-1}}$, then $\eta = 1$ and $\gamma, u$ are real.

So we get 
\begin{align}\label{case5}
  A_q(\Pi) = \{\alpha, \alpha ^{-1}, \gamma, \gamma ^{-1}\},
\end{align}
where $\alpha$ and $\gamma$ are real.

\subsection{Estimates} We now obtain upper bounds for the number of times that various cases can occur. Recall that under our assumption, $\alpha$ is not unitary; we now want to specify a bound on the size of this parameter. Fix $r > 1$ and let $T = T (r)$ and $m$ be defined as in the previous section.

\begin{itemize}
\item In Case 1, we have 
\begin{align*}
  A_q(\Lambda ^2 \Pi) = \{\overline{\eta}, \overline{\eta}q ^{2t}, u ^2, \overline{u ^2 \eta ^2}, \overline{\eta}q ^{-2t}, \overline{\eta}\},
\end{align*}
and the trace is 
\begin{align*}
  a_q(\Lambda ^2 \Pi) = \overline{\eta}(2 + Q _{2t}) + u ^2 + \overline{u ^2 \eta ^2},
\end{align*}
so $|a_q(\Lambda ^2 \Pi)| > m ^2 -2$.

\item In Case 2(a)(i),
  \begin{align*}
    |a_q(\Pi)| = |u Q _t + v \pm v| > m - 2.
  \end{align*}

\item For Case 2(a)(ii),
  \begin{align*}
    |a_q(\Pi)| = |u Q _t + (v + \overline{v \eta})| > m - 2.
  \end{align*}

\item Case 2(b)(i) is similar to Case 2(a)(i), and the same estimate holds.

\item For Case 2(b)(ii), we will write $A_q(\Pi)$ as $\{u q ^t, u q ^{-t}, v q ^s, v q ^{-s}\}$, where $u,v \in \{\pm 1\}$, $t$ is positive, and $s$ is non-negative:
\begin{align*}
  A_q(\Lambda ^2 \Pi) = \{1, uv q ^{t + s}, uv q ^{t-s}, uv q ^{-t + s}, uv q ^{-t-s}, 1\}
\end{align*}
so 
\begin{align*}
  |a_q(\Lambda ^2 \Pi)| = |2 + uv (Q_t Q_s)| > 2 (m-2). 
\end{align*}
\end{itemize}

In Cases 2(a)(i),(ii) and 2(b)(i), we have the same bound on $|a_q(\Pi)|$, and since $\Pi$ is cuspidal we can apply Theorem~\ref{dr} and conclude that the set $T$ of places at which these cases occur has the bound 
\begin{align}\label{est1}
  \overline{\delta} (T)  \leq \frac{1}{(m-2)^2}.
\end{align}

For Cases 1 and 2(b)(ii), we need to first establish the isobaric decomposition of the exterior square lift.
\begin{lemma}
Let $\Pi$ be an essentially self-dual cuspidal automorphic representation for GL(4) that is of symplectic type and that does not admit a non-trivial self-twist. Recall that $\Lambda ^2 \Pi$ has the isobaric decomposition $\widetilde{r_5} \boxplus \omega$. Then $\widetilde{r_5}$ is cuspidal.
\end{lemma}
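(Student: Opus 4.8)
The plan is to show that the only way $\widetilde{r_5}$ could fail to be cuspidal is for $\Pi$ to be of type (T) or to admit a self-twist, both of which are excluded by hypothesis. Recall from Theorem~\ref{ar} that $\Pi$ being of symplectic type (case (c)) without a self-twist puts us precisely in the situation where $\Lambda^2 \Pi = \widetilde{r_5} \boxplus \omega$ with $\omega$ a Hecke character and $\widetilde{r_5}$ an automorphic representation of $GL_5$; the task is to rule out further degeneration of $\widetilde{r_5}$. First I would recall that $\Pi$ of symplectic type means $\Pi \simeq \widetilde{\Pi} \otimes \omega$ (so $\eta = \omega$ in the notation above, with $\omega$ the similitude character), equivalently that $\Lambda^2 \Pi$ contains $\omega$ as an isobaric summand. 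Then I would analyze the possible isobaric decompositions of $\widetilde{r_5}$ as a representation of $GL_5$: the partitions of $5$ give the options $5$, $4+1$, $3+2$, $3+1+1$, $2+2+1$, $2+1+1+1$, $1+1+1+1+1$, and I would argue each non-trivial one forces structure on $\Pi$ contradicting our assumptions.

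The key steps, in order: (1) If $\widetilde{r_5}$ had a $GL_1$ summand, then $\Lambda^2 \Pi$ would contain two Hecke characters, which by the classification of when $\Lambda^2$ picks up characters (essentially, this corresponds to $\Pi$ being induced or otherwise having extra self-twists — one uses that a character summand in $\Lambda^2 \Pi$ twisted appropriately corresponds to a self-twist or a decomposition of $\Pi$) forces $\Pi$ to admit a non-trivial self-twist or to be automorphically induced, i.e. of type (T); contradiction. (2) If $\widetilde{r_5}$ had a $GL_2$ cuspidal summand $\sigma$, I would combine this with the $GL_1$ piece $\omega$ and the remaining $GL_3$ part, and use that the poles of $L(s, \Pi \times \Pi)$ (equivalently $L(s, \Lambda^2\Pi \otimes \omega^{-1}) L(s, \mathrm{Sym}^2\Pi \otimes \omega^{-1})$) together with the known factorization force $\Pi$ to be a tensor product lift $\pi_1 \boxtimes \pi_2$ or an Asai lift or an automorphic induction — all of type (T); contradiction. (3) The $GL_3 + GL_2$, $GL_3 + GL_1 + GL_1$, and purely lower-rank decompositions are handled the same way: any such splitting of $\Lambda^2\Pi$ beyond the single character $\omega$ means $\Pi \boxtimes \Pi$ (equivalently the Rankin--Selberg square) decomposes more than the minimal $\mathrm{Sym}^2 \boxplus \Lambda^2$ allows for a non-(T), self-twist-free $\Pi$, so again we land in type (T). For the bookkeeping I would lean on the dictionary between isobaric summands of $\Lambda^2\Pi$ and the list in Theorem~\ref{ar}(ii), together with the standard fact (Jacquet--Shalika, Ramakrishnan) that the orders of poles of Rankin--Selberg $L$-functions detect these functorial relations.

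I expect the main obstacle to be step (2), ruling out a $GL_2$ cuspidal summand in $\widetilde{r_5}$: unlike a $GL_1$ summand, which transparently gives a self-twist, a $GL_2$ summand requires one to trace through which functorial source of $\Pi$ produces a $2$-dimensional constituent in the exterior square, and to check that every such source is already on the type (T) list (or forces a self-twist). Concretely, I would need to argue that if $\Lambda^2\Pi \simeq \omega \boxplus \sigma \boxplus (\text{rest})$ with $\sigma$ cuspidal for $GL_2$, then pairing $\Lambda^2\Pi$ with itself and with $\mathrm{Sym}^2\Pi$ forces extra poles in $L(s, \Pi \times \widetilde{\Pi} \otimes \chi)$ for suitable $\chi$, which by Ramakrishnan's multiplicity-one-type arguments means $\Pi$ is a functorial lift from $GL(2)/K$ for a quadratic algebra $K$ — i.e. an Asai lift or tensor product — or is automorphically induced, contradicting the exclusion of type (T). A secondary, more routine obstacle is making sure the central/similitude character of $\widetilde{r_5}$ and $\omega$ are matched correctly so that the excluded self-twist in the hypothesis is exactly the one that would arise; this is bookkeeping but must be done carefully since the whole point of the hypothesis "does not admit a non-trivial self-twist'' is to kill precisely the degenerate case.
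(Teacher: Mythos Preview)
Your overall strategy---rule out low-degree isobaric summands of $\widetilde{r_5}$---matches the paper, but your execution diverges in two places, one harmless and one substantive.

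First, the harmless one: you list all partitions of $5$, but since any non-cuspidal isobaric representation of $GL_5$ must contain a summand of degree $\leq 2$, it suffices to rule out degree $1$ and degree $2$ summands. The paper does exactly this and nothing more. For the degree $1$ case, the paper makes precise what you gesture at: it uses the factorization
\[
L(s,\Pi\times\widetilde{\Pi}\otimes\chi)=L(s,\Pi,\mathrm{Sym}^2\otimes\eta\chi)\,L(s,\Lambda^2\Pi\otimes\eta\chi),
\]
together with the absence of non-trivial self-twists and Shahidi's non-vanishing of the symmetric-square factor at $s=1$, to conclude that $\Lambda^2\Pi$ has exactly one $GL(1)$ summand, namely $\omega=\eta^{-1}$, and that it occurs with multiplicity one. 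This is the concrete mechanism behind your ``a character summand corresponds to a self-twist'' heuristic.

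The substantive issue is your step (2), the $GL_2$ summand. You correctly flag this as the main obstacle, and your proposed argument---pairing $\Lambda^2\Pi$ with itself or with $\mathrm{Sym}^2\Pi$ to force extra Rankin--Selberg poles---does not obviously go through: a cuspidal $GL_2$ summand $\sigma$ of $\Lambda^2\Pi$ does not produce a pole of $L(s,\Lambda^2\Pi\otimes\mu)$ for any character $\mu$, so the self-twist contradiction you want does not materialize from character twists alone, and it is not clear how the higher pairings you mention would pin down $\Pi$ as type (T). The paper sidesteps this entirely by invoking Proposition~4.2 of Asgari--Raghuram~\cite{AR11}, which already establishes that $\Lambda^2\Pi$ has no degree-two isobaric summand. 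So your plan is sound in outline, but the $GL_2$ case should be handled by citation rather than by the speculative pole-counting you describe.
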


\begin{proof}
Let us write $r := \widetilde{r_5}$. Now $r$ is an isobaric automorphic representation for GL(5), so if it is not cuspidal, it must have an isobaric summand of degree one or two. To address the latter case, we note that by Proposition 4.2 of ~\cite{AR11}, $\Lambda ^2 \Pi$ does not have a degree two summand, and so the same holds for $r$.

We turn to the issue of a degree one summand. 
Recall that the Hecke character $\eta$ is defined such that $\widetilde{\Pi}\simeq \Pi \otimes \eta$.
Now consider, for an arbitrary Hecke character $\chi$, 
\begin{align*}
L(s, \Pi \times \widetilde{\Pi}\otimes \chi) &= L(s, \Pi \times \Pi \otimes \eta \chi)\\
&= L(s, \Pi, {\rm Sym}^2 \otimes \eta \chi) L(s, \Lambda ^2 \Pi \otimes \eta \chi).
\end{align*}
If $\chi \neq 1$, the left-hand side does not have a pole at $s=1$ since $\Pi$ does not admit a non-trivial self-twist. Then by Shahidi~\cite{Sh97}, $L(s, \Pi, {\rm Sym}^2 \otimes \eta \chi)$ is non-zero at $s=1$, so $L(s, \Lambda ^2 \Pi \otimes \eta \chi)$ has no pole at that point. Thus $\Lambda ^2 \Pi$ has no GL(1) isobaric summand other than possibly $\eta ^{-1}$, and the same applies for $r$.
Now $\Lambda ^2 \Pi$ has at least one degree one summand, namely $\omega$, so we must have $\omega = \eta ^{-1}$. When $\chi$ is trivial, the left-hand side of the equation above has a simple pole at $s=1$, which means that, following the reasoning above, $\eta ^{-1}$ only appears once as a degree one summand of $\Lambda ^2 \Pi$; therefore, $r$ has no GL(1) summands.
We conclude that $r$ is cuspidal.
\end{proof}

Now we can address Cases 1 and 2(b)(ii). Since $\Pi$ does not admit a non-trivial self-twist, $\widetilde{r_5}$ is cuspidal. Thus the bounds on $a_q(\Lambda ^2 \Pi)$ imply that $|a_q(\widetilde{r_5})| > 2 m - 3$, and Theorem~\ref{dr} implies that the set $T$ at which this can occur has bound 
\begin{align}\label{est2}
   \overline{\delta} (T) \leq \frac{1}{(2m-3)^2}.
\end{align}

Combining estimates~(\ref{est1}) and~(\ref{est2}) finishes the proof of the remaining cases of Theorem~\ref{t1}.

\subsection{Self-dual case}

We fix our cuspidal automorphic representation $\Pi$ to be self-dual and improve our estimates.

As always, let $q$ be a prime at which $\Pi$ is unramified. Being self-dual, the central character of $\Pi$ is either trivial or quadratic, and thus the determinant $w$ associated to the conjugacy class $A_q(\Pi)$ will either be $+1$ or $-1$.

\subsubsection{Trivial character}
Here we set $w = + 1$, then
$A_q(\Pi) = A_q(\widetilde{\Pi})$ implies that 
\begin{align*}
  A_q(\Pi) = \{\alpha, \alpha ^{-1}, \beta , \beta ^{-1}\}.
\end{align*}
Now
\begin{align*}
  A_q(\overline{\Pi})= \{\overline{\alpha}, \overline{\alpha ^{-1}}, \overline{\beta}, \overline{\beta ^{-1}}\}= \{\alpha ^{-1}, \alpha, \beta ^{-1}, \beta\} = A_q(\widetilde{\Pi}).
\end{align*}

We define $T = T (r)$ as before. For $q \in T$, there are three possible consequences for the structure of the Langlands conjugacy class: 

\begin{itemize}
  \item \textbf{Case I:} Here $\alpha = \overline{\alpha}$ and $\beta = \overline{\beta}$. We write $\alpha = u q^t$, $\beta = v q^s$, where $u,v \in \{1,-1\}$ and $t,s$ are non-negative (and at least one of them is positive). Then 
    \begin{align*}
      A_q(\Pi) = \{uq^t, uq^{-t}, vq^s, vq^{-s}\}.
    \end{align*}
\item \textbf{Case II:} We have $\alpha = \overline{\alpha}$ and $\beta = \overline{\beta ^{-1}}$, $\alpha$ is real and $\beta$ has absolute value 1.
  \begin{align*}
   A_q(\Pi) = \{uq^t, uq^{-t}, \beta, \beta ^{-1}\} 
  \end{align*}
where $t$ is positive and $u = \pm 1$.
\item \textbf{Case III:} We now assume that $\alpha \neq \overline{\alpha}$.

If $\alpha = \overline{\alpha^{-1}}$, then $\beta = \overline{\beta}$ or $\overline{\beta^{-1}}$. If the latter is true, then both $\alpha$ and $\beta$ have absolute value 1, and $q$ is a Ramanujan prime, so we shall exclude this possibility. If the former is true (i.e., $\beta = \overline{\beta}$), then we are in Case II.
Therefore, we shall exclude these two possibilities.

The remaining option is that (without loss of generality) $\alpha = \overline{\beta}$. We have 
\begin{align*}
  A_q(\Pi) = \{\alpha, \alpha^{-1}, \overline{\alpha}, \overline{\alpha^{-1}}\}
\end{align*}
\end{itemize}

For Cases I and III, it will be useful to consider the exterior square lift.\\

Case I:
\begin{align*}
  A_q(\Lambda^2\Pi) = \{1, uvq^{t+s}, uvq^{t-s}, uvq^{-t+s}, uvq^{-t-s}, 1\}. 
\end{align*}
Since $uv$ is real, 
 either $a_q(\Lambda^2\Pi) \geq 2m + 2$ or $a_q(\Lambda^2\Pi) \leq -2m + 2$. \\

Case III:
\begin{align*}
  A_q(\Lambda^2 \Pi) = \{1, |\alpha|^2, \alpha/\overline{\alpha}, \overline{\alpha}/\alpha, |\alpha|^{-2}, 1\}. 
\end{align*}
The associated Hecke eigenvalue is 
\begin{align*}
  a_q(\Lambda^2 \Pi) = |\alpha|^2 + |\alpha|^{-2} + (\alpha + \overline{\alpha})^2 / |\alpha|^2 \geq m ^2 - 2.
\end{align*}

In Cases I and III, $|a_q(\Lambda^2 \Pi)| > m$. Now recall that $\Lambda^2 \Pi \simeq \widetilde{r_5} \boxplus \omega$, where $\widetilde{r_5}$ is an automorphic representation for GL(5) and $\omega$ is a Hecke character. So we have that $|a_q (\widetilde{r_5})| > m - 1$.\\

Case II:
Note that 
\begin{align*}
a_q(\Pi) = u Q_t + (v + \overline{v}) 
\end{align*}
and, for the exterior square transfer, we see that 
\begin{align*}
A_q(\Lambda ^2 \Pi) &= \{1, uv q^t, u \overline{v}q^t, uv q ^{-t}, u \overline{v} q ^{-t},1\}\\
a_q(\Lambda ^2 \Pi) &= 2 + u (v + \overline{v})Q_t.
\end{align*}

If $u (v + \overline{v}) \geq -1$, then 
\begin{align*}
|a_q (\Pi)| > m - 1,
\end{align*}
whereas if $u (v + \bar{v}) < -1$, 
\begin{align*}
a_q(\Lambda ^2 \Pi) < 2-m,
\end{align*}
and so 
\begin{align*}
a_q(\widetilde{r_5}) < 1-m
\end{align*}
since $\omega = 1$.

So for the case of $\Pi$ with trivial central character, we can now combine our estimates to obtain   
\begin{align*}
\overline{\delta}(T) \leq  \frac{2}{(m-1)^2}.
\end{align*}

\subsubsection{Quadratic character}
Here, we need to consider both possible values of $w$. When $w = -1$, for $q \in T$, a similar analysis to above leads to a unique possibility:
\begin{align*}
A_q(\Pi) = \{u q ^t, u q ^{-t}, 1, -1\},
\end{align*}
thus
\begin{align*}
  |a_q(\Pi)| = Q_t > m.
\end{align*}

Combining estimates from both possibilities for $w$, we obtain 
\begin{align*}
\overline{\delta}(T) \leq   {\rm min}\left\{ \frac{1}{2}, \frac{2}{(m-1)^2}  \right\}   + \frac{1}{m ^2} .
\end{align*}

\section{Remaining Cases}\label{T}

Here, we address Theorem~\ref{t3}:
\begin{thm}
  Let $S$ be the set of places at which the Ramanujan conjecture holds. If $\Pi$ is a unitary cuspidal automorphic representation $\Pi$ for GL(4)/$F$ that is of type (T), then 
 \begin{align*}
\underline{\delta}(S) \geq \frac{33}{70}.
 \end{align*}
More specifically,
\begin{itemize}
  \item If $\Pi$ is an Asai transfer of a dihedral representation for GL(2), 
then we have $\underline{\delta}(S) = 1$.
  \item If $\Pi$ is automorphically induced from GL(2), then $\underline{\delta}(S) \geq 33/70$.
  \item If $\Pi$ is the automorphic tensor product of two cuspidal automorphic representations for GL(2), then $\underline{\delta}(S) \geq 33/35$.
\end{itemize}
\end{thm}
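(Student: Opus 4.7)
The plan is to handle each of the three subcases constituting type (T) separately; the overall lower bound $33/70$ is then determined by the weakest of them. In every subcase, $\Pi$ arises as a functorial lift from one or two GL(2) representations, so at an unramified prime $v$ of $F$ its Satake parameters are explicit polynomials in those of the auxiliary GL(2) representation(s) at the primes above $v$. This reduces temperedness of $\Pi_v$ to the Ramanujan property for the auxiliary GL(2) representation(s), to which I would apply the Kim--Shahidi result~\cite{KS02} that the Ramanujan primes have lower Dirichlet density at least $34/35$.

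For the Asai lift of a dihedral $\pi$, I would first recall that a dihedral cuspidal automorphic representation of GL(2) is automorphically induced from a Hecke character of a quadratic extension, so has unitary Satake parameters at every unramified place. The Satake parameters of the Asai lift at any unramified $v$ are pairwise products of these (with Galois twists at inert $v$) and are therefore unitary as well, so $\Pi$ is tempered at every unramified prime and $\underline{\delta}(S) = 1$. For the tensor product case $\Pi = \pi_1 \boxtimes \pi_2$, the Satake parameters of $\Pi_v$ are the four pairwise products of those of $\pi_{1,v}$ and $\pi_{2,v}$; a short calculation using unitarity of the central characters shows that $\Pi_v$ is Ramanujan if and only if both $\pi_{i,v}$ are, and then a union bound applied to Kim--Shahidi for each factor yields $\underline{\delta}(S) \geq 1 - 2/35 = 33/35$.

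The remaining and most delicate subcase is $\Pi = {\rm AI}(\pi)$ with $\pi$ cuspidal for GL(2)/$E$ and $E/F$ quadratic, which produces the bound $33/70$ governing the whole theorem. At a split prime $v$ of $F$ with places $w_1, w_2$ above, the Satake parameters of $\Pi_v$ are the union of those of $\pi_{w_1}$ and $\pi_{w_2}$, so $\Pi_v$ is tempered if and only if $\pi$ is Ramanujan at both $w_i$. The key step will be a Dirichlet density transfer: letting $B_E$ be the set of primes of $E$ at which $\pi$ fails Ramanujan (with $\overline{\delta}(B_E) \leq 1/35$ by Kim--Shahidi), for each split $v$ of $F$ with some $w_i \in B_E$ I would pick such a witness, obtaining a norm-preserving injection of $B_F^{{\rm split}}$ into $B_E$. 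Using $\sum_v N(v)^{-s} \sim \sum_w N(w)^{-s} \sim \log(1/(s-1))$ as $s \to 1^+$, together with the fact that inert primes contribute negligibly on the $E$ side because their norms are squared, this injection gives $\overline{\delta}(B_F^{{\rm split}}) \leq 1/35$. Since split primes of $F$ have density $1/2$ by Chebotarev, discarding the inert primes yields $\underline{\delta}(S) \geq 1/2 - 1/35 = 33/70$. The main obstacle is precisely that Kim--Shahidi on $E$ gives useful Dirichlet-density information only at primes lying over split $v$, since inert primes carry negligible mass on the $E$ side; this is what forces one to discard the inert primes in the induction case, and hence forces the final bound to be $33/70$ rather than closer to $33/35$.
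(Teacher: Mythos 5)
Your proposal is correct and follows essentially the same route as the paper: dihedral Asai lifts are tempered everywhere, the tensor product case uses a union bound with the Kim--Shahidi $34/35$ density, and the automorphic induction case restricts attention to split primes and transfers the Kim--Shahidi bound across the extension $E/F$, discarding inert primes. Your norm-preserving-injection argument for comparing $\overline{\delta}(B_F^{\rm split})$ with $\overline{\delta}(B_E)$ spells out more explicitly the density-transfer step that the paper treats informally (``relative lower Dirichlet density $33/35$''), but the two are algebraically equivalent since $\frac{1}{2}\cdot\frac{33}{35} = \frac{1}{2} - \frac{1}{35} = \frac{33}{70}$, and the underlying ideas coincide.
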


These are Cases (a), (b), and (d) of Theorem~\ref{ar}. Although we do not have cuspidality of the exterior square lift, we can instead make use of known results for cuspidal automorphic representations for GL(2), which allows us to obtain much stronger bounds compared to those of Theorems~\ref{t1} and~\ref{t2}. We briefly comment on these cases below.\\

\textbf{Case (a):} let $\Pi$ be a cuspidal automorphic representation for GL(4) such that $\Pi = \pi_1 \boxtimes \pi_2$, where $\pi_1, \pi_2$ are cuspidal automorphic representations for GL(2). Fix a finite place $v$ at which both $\pi_1$ and $\pi_2$ are unramified. We write 
\begin{align*}
  A_v(\pi_1) &= \{\alpha,\beta\},\\
  A_v(\pi_2) &= \{\gamma, \delta\}.
\end{align*}
Then 
\begin{align*}
  A_v(\Pi) = \{\alpha \gamma, \alpha \delta, \beta \gamma, \beta \delta\}. 
\end{align*}
By Kim--Shahidi~\cite{KS02}, for any cuspidal automorphic representation for GL(2), the set of places at which the Ramanujan conjecture holds has a lower Dirichlet density of at least 34/35. Therefore, for $\Pi$, the set of places at which the Ramanujan conjecture holds has a lower Dirichlet density of at least 33/35.\\

\textbf{Case (b):}
Let $\Pi$ be the Asai transfer of a dihedral cuspidal automorphic representation for GL(2). At all the places for which $\Pi$ is unramified, the associated Satake parameters will have absolute value 1 since they arise from the parameters of the corresponding dihedral representation, which is known to satisfy the Ramanujan conjecture. \\

\textbf{Case (d):}
Here we consider the cuspidal automorphic representation ${\rm AI}(\pi)$ for $GL_4(\A_F)$ which is the automorphic induction of $\pi$ for $GL_2(\A_E)$. Since the field extension $E/F$ is of degree 2, there are two possibilities for the shape of the local $L$-factor. If a place $v$ of $F$ splits in $E$, then we have (note that $\alpha_u, \beta_u$ denote the Satake parameters for $\pi$ associated to a place $u$) 
\begin{align*}
  L(s, {\rm AI}(\pi)_v) =& \prod_{w_1,w_2 \mid v} L(s, \pi_{w_i}) \\
   =& (1 - \alpha _{w_1}Nv^{-s})^{-1}
 (1 - \beta _{w_1}Nv^{-s})^{-1}\\
&\cdot (1 - \alpha _{w_2}Nv^{-s})^{-1}(1 - \beta _{w_2}Nv^{-s})^{-1}
\end{align*}
whereas if $v$ does not split in $E$, we have 
\begin{align*}
  L(s, {\rm AI}(\pi)_v) =& \prod_{w \mid v} L(s, \pi_w)\\
   =& (1 - \alpha _{w}Nv^{-2s})^{-1}
 (1 - \beta _{w}Nv^{-2s})^{-1}.
\end{align*}
Since, by~\cite{KS02}, there exists a set of places of lower Dirichlet density 34/35 at which $\pi$ satisfies the Ramanujan conjecture, we see that for the first equation above, we have a set of places of relative lower Dirichlet density 33/35 at which the Ramanujan conjecture holds. For the second equation, $w$ is inert over $v$, the density result for $\pi$ does not provide us with any information for these places. Overall, we therefore have established a set of places of lower Dirichlet density 33/70 at which the Ramanujan conjecture holds.

\section{GL(3)}\label{GL3}

In this section, we briefly examine what our approach would give for unitary cuspidal automorphic representations for GL(3)/$F$, where $F$ is a number field. By Ramakrishnan~\cite{Ra04b}, given $\pi$ for GL(3)/$\Q$ the set of Ramanujan primes is infinite; however, it is not known if it is of positive density. As mentioned in that paper, this result can be extended to any number field for which the associated Dedekind zeta function has no real zeros in the interval $(0,1)$.  The aim here is to obtain a bound for the size of the Satake parameters that applies to a large set (i.e., of positive density) of primes. We then briefly discuss the self-dual case.

We now prove Theorem~\ref{t4}:

\begin{thm}
Let $\pi$ be a cuspidal automorphic representation for GL(3)/$F$, where $F$ is a number field.
Fix $r \geq 1$, and denote by $S_r$ the set of primes $p$ where 
\begin{align*}
  \left(\frac{r + 1}{2}\right) - \sqrt{\left(\frac{r + 1}{2}\right)^2 -1}  \leq |\alpha_{i,p}| \leq   \left(\frac{r + 1}{2}\right) + \sqrt{\left(\frac{r + 1}{2}\right)^2 -1}
\end{align*}
for all $i$.
Then 
  \begin{align*}
    \underline{\delta}(S_r) \geq 1 - \frac{1}{r^2}.
  \end{align*}
\end{thm}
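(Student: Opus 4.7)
The plan is to combine Ramakrishnan's trace bound (Theorem~\ref{dr}) with a structural analysis of the Satake conjugacy class forced by unitarity. Applied directly to $\pi$, Theorem~\ref{dr} gives
\begin{align*}
\underline{\delta}(\{v : |a_v(\pi)| \leq r\}) \geq 1 - \frac{1}{r^2},
\end{align*}
so it suffices to show that every unramified $v$ with $|a_v(\pi)| \leq r$ lies in $S_r$.

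Next, fix such a prime $v$ and write $A_v(\pi) = \{\alpha_1, \alpha_2, \alpha_3\}$. Since $\pi$ is unitary cuspidal, $\widetilde{\pi} \simeq \overline{\pi}$, so $\{\alpha_i^{-1}\} = \{\overline{\alpha_i}\}$ as multisets; hence there exists $\sigma \in S_3$ with $\alpha_i^{-1} = \overline{\alpha_{\sigma(i)}}$, and in particular $|\alpha_i|\cdot|\alpha_{\sigma(i)}| = 1$. If $\sigma$ is a $3$-cycle, these three identities (plus positivity) force $|\alpha_i|=1$ for all $i$, so $v$ is Ramanujan. Otherwise $\sigma$ has a fixed point, say $\sigma(1)=1$, giving $|\alpha_1|=1$; the remaining two indices are either also fixed (Ramanujan) or swapped. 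In the swap case, $\alpha_2^{-1} = \overline{\alpha_3}$, so writing $\alpha_2 = \xi t$ with $|\xi|=1$ and $t = |\alpha_2| \geq 1$ gives $\alpha_3 = \xi/t$. Thus the only nontrivial structural case is
\begin{align*}
A_v(\pi) = \{u,\, \xi t,\, \xi/t\}, \qquad |u|=|\xi|=1,\ t \geq 1.
\end{align*}

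In that case $a_v(\pi) = u + \xi(t + 1/t)$, so the reverse triangle inequality yields $t + 1/t - 1 \leq |a_v(\pi)| \leq r$, i.e.\ $t + 1/t \leq r + 1$. Setting $M := (r+1)/2$, this is the inequality $t^2 - 2Mt + 1 \leq 0$, whose positive solutions form exactly the interval $[\,M - \sqrt{M^2 - 1},\ M + \sqrt{M^2 - 1}\,]$, which matches the bounds in the theorem. Since $M \geq 1$ this interval contains $1$, so all three of $|\alpha_{1,v}|=1$, $|\alpha_{2,v}|=t$, $|\alpha_{3,v}|=1/t$ lie in it, giving $v \in S_r$.

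I do not foresee any serious obstacle; the only genuine content is isolating, via the permutation $\sigma$, the single nontrivial structural possibility for $A_v(\pi)$. After that, the argument is just the triangle inequality applied to a carefully chosen parametrisation, followed by a quadratic inversion.
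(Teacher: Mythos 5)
Your argument is correct and follows essentially the same route as the paper: invoke Ramakrishnan's trace bound, show by unitarity and $\widetilde{\pi}\simeq\overline{\pi}$ that the only non-Ramanujan structure for $A_v(\pi)$ is $\{u,\,\xi t,\,\xi/t\}$ with $|u|=|\xi|=1$, then deduce $t+1/t\le r+1$ from the reverse triangle inequality and invert the quadratic. Your bookkeeping via the permutation $\sigma$ is just a cleaner phrasing of the paper's casework on which parameter equals which conjugate-inverse; otherwise the proofs coincide.
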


\begin{proof}
Note that the first part of the proof is the same as in~\cite{Ra04b}.

We let $q$ denote both a prime of $F$ and its norm, as in previous sections.

Fix a prime $q$ at which $\pi$ is unramified. We shall express the associated Langlands conjugacy class $A_q(\pi)$ as $\{\alpha, \beta, \gamma\}$.
We make use of the fact that $A_q(\widetilde{\pi}) = A_q(\overline{\pi})$: 
\begin{align*}
\{\alpha ^{-1}, \beta ^{-1}, \gamma ^{-1}\} = \{\overline{\alpha}, \overline{\beta}, \overline{\gamma}\}. 
\end{align*}

Now let us consider the possibilities:\\
If $\alpha ^{-1} = \overline{\alpha}$, then we are left to consider
\begin{align*}
  \{\beta ^{-1}, \gamma ^{-1}\} = \{\overline{\beta}, \overline{\gamma}\}.
\end{align*}
If $\beta ^{-1} = \overline{\beta}$, then the same holds for $\gamma$ (i.e., $\gamma ^{-1} = \overline{\gamma}$) and so $q$ must be a Ramanujan prime.
If $\beta ^{-1} \neq \overline{\beta}$, then $\beta ^{-1} = \overline{\gamma}$ and we shall write $\beta = u q ^t$ where $u$ is unitary and $t$ is positive (without loss of generality). So 
\begin{align*}
  A_q(\pi) = \{\alpha, uq ^t, uq ^{-t}\}.
\end{align*}

If $\alpha ^{-1} \neq \overline{\alpha}$, then (without loss of generality) $\alpha ^{-1} = \overline{\beta}$.
We write $\alpha = uq ^t$ and $\beta = u q ^{-t}$. This leaves $\gamma ^{-1} = \overline{\gamma}$, and so $A_q(\pi)$ has the same structure as above.

Since, for any cuspidal automorphic representation $\pi$ for GL(n), we have
\begin{align*}
  \underline{\delta}(\{v \mid |a_v(\pi)|\leq r\}) \geq 1-\frac{1}{r^2},
\end{align*}
we observe that the inequality 
\begin{align*}
  q ^t + q ^{-t} \leq r+1
\end{align*}
holds for a set of lower Dirichlet density $1- 1/r^2$. The inequality above implies that 
\begin{align*}
  q^t \leq  \left(\frac{r + 1}{2}\right) + \sqrt{\left(\frac{r + 1}{2}\right)^2 -1},
\end{align*}
completing the proof.
\end{proof}

In the GL(4) case, we had imposed the condition of self-duality in order to improve our bounds. One might ask if the same can be done here. In fact, the self-dual case is much simpler:
One knows that a self-dual cuspidal automorphic representation $\Pi$ for GL(3) is actually a twist of an adjoint lift of a cuspidal automorphic representation for GL(2). Applying results of Kim--Shahidi, we would then obtain a bound of 
\begin{align*}
\underline{\delta}(S(\Pi,1)) \geq \frac{34}{35}, 
\end{align*}
where $S(\Pi,1)$ is defined as before, namely the set of Ramanujan primes for $\Pi$.

\bibliography{mybib}{}
\bibliographystyle{alpha}

\end{document}